\pgfplotsset{compat=1.15}
\newtheorem{theorem}{Theorem}[section]
\newtheorem{theorem2}[theorem]{Theorem}
\newtheorem{definition}[theorem]{Definition}
\newtheorem{proposition}[theorem]{Proposition}
\newtheorem{corollary}[theorem]{Corollary}
\newtheorem{remark}[theorem]{Remark}
\newtheorem{lemma}[theorem]{Lemma}
        \def\X{\mathrm{X}}
        \def\S{\mathrm{S}}
\newcommand\N{\mathbb{N}}
\newcommand\R{\mathbb{R}}
        \newcommand{\HH}{{\mathcal H}}
        \renewcommand{\H}{\HH^1}
   \newcommand{\defeq}{:=}
        \def\dist{\mathrm{dist}\,}
        \def\conv{\mathrm{conv}\,}
        \def\oord{\mathrm{ord}\,}
        \def\oordball{\mathrm{ordball}\,}
        \def\Int{\mathrm{Int}\,}
        \def\Relint{\mathrm{Relint}\,}
        \def\diam{\mathrm{diam}\,}
       \def\turn{\mathrm {turn}\,}
\title{On regularity of maximal distance minimizers \\}
\author{Yana Teplitskaya\footnote{Chebyshev Laboratory, St. Petersburg State University, 14th Line V.O., 29, Saint Petersburg 199178 Russia.} }
\begin{document}

\maketitle

\renewcommand{\abstractname}{Abstract}
\begin{abstract}
We study the properties of sets $\Sigma$ which are the solutions of the maximal distance minimizer problem, id est of sets
having the minimal length (one-dimensional Hausdorff measure) over the class	of closed connected sets $\Sigma \subset \mathbb{R}^2$ satisfying the inequality $\max_{y \in M} \dist(y,\Sigma) \leq r$ for a given compact
set $M \subset \mathbb{R}^2$ and some given $r > 0$. 
Such sets can be considered as the shortest networks of radiating cables arriving to each customer (from the set $M$ of customers) at a distance at most $r$.
	
In this work it is proved that each maximal distance minimizer is a union of finite number of simple curves, having one-sided tangents at each point. Moreover the angle between these rays at each point of a maximal distance minimizer is greater or equal to $2\pi/3$. 
It shows that a maximal distance minimizer is isotopic to a finite Steiner tree even for a ``bad'' compact $M$, which differs it from a solution of the
Steiner problem (there exists an example of a Steiner tree with an infinite number of branching points).
Also we classify the behavior of a minimizer in a neighbourhood of an arbitrary point of $\Sigma$.


In fact, all the results are proved for more general class of local minimizer, i.~e. sets which are optimal in a neighbourhood of its arbitrary point.
\end{abstract}

\section{Introduction}

For a given compact set $M \subset \mathbb{R}^2$ consider the functional
\[
	F_{M}(\Sigma)\defeq \sup _{y\in M}\dist (y, \Sigma),
\]
where $\Sigma$ is a closed subset of $\R^2$ and $\dist(y, \Sigma)$ stands for the Euclidean distance between $y$ and $\Sigma$ (naturally, $F_{M} (\emptyset) \defeq +\infty$). The quantity $F_M (\Sigma)$ will be called the \emph{energy} of $\Sigma$.
Consider the class of closed connected sets $\Sigma \subset \R^2$ satisfying $F_M(\Sigma) \leq r$ for some $r > 0$. We are interested in the properties of
sets of the minimal length (one-dimensional Hausdorff measure) $\H(\Sigma)$ over the mentioned class. Such sets will be further called \emph{minimizers}. 

It is known that for all $r>0$ the set  of minimizers is nonempty. It is proven also that for each minimizer of positive length the equality $F_M(\Sigma)=r$ holds. Furthermore the set of minimizers coincides with the set of solutions of the dual problem: minimize $F_M$ over all closed connected sets  $\Sigma \subset \mathbb{R}^2$ with prescribed bound on the total length $\H(\Sigma) \leq l$. Thus the required sets are called maximal distance minimizers.
For the rigorous statements and proofs of these claims, including the general $n$-dimensional case, see~\cite{mir}.

Let $B_\rho (x)$ be the open ball of radius $\rho$ centered at a point $x$, and let $B_\rho(M)$ be the open $\rho$-neighborhood of $M$ i.e.\
	\[
        		B_\rho(M) \defeq \bigcup_{x\in M} B_\rho(x).
	\]
	
It is easy to see that the set $\Sigma$ is bounded (and, thus, compact), since $\Sigma \subset \overline{B_r(\conv M)}$, where $\conv M$ stands for the convex hull of the set $M$. 
 
 \begin{definition}
A point $x \in \Sigma$ is called \emph{energetic}, if for all $\rho>0$ one has
 	\[
		F_{M}(\Sigma \setminus B_{\rho}(x)) > F_{M}(\Sigma).
	\]
\end{definition}
Denote the set of all energetic points of $\Sigma$ by $G_\Sigma$.

Each minimizer $\Sigma$ can be split into three disjoint subsets:
        \[
        \Sigma=E_{\Sigma}\sqcup\X_{\Sigma}\sqcup\S_{\Sigma},
        \]
        where $X_{\Sigma}\subset G_\Sigma$ 
is the set of isolated energetic points 
(i.e.\ every $x\in X_\Sigma$ is energetic and there is a $\rho>0$ possibly depending on $x$ such that $B_{\rho}(x)\cap G_{\Sigma}=\{x\}$), $E_{\Sigma} := G_{\Sigma}\setminus X_{\Sigma}$ is the set of non isolated energetic points and $S_\Sigma \defeq \Sigma \setminus G_\Sigma$ is the set of non energetic points also called the \textit{Steiner part}.
Clearly, the limit points of $G_\Sigma$ belong to $E_\Sigma$.

 In~\cite{mir} (for the plane) and in~\cite{PaoSte04max} (for the general case) the following properties of minimizers have been proven:
        \begin{itemize}
        \item[(a)] A minimizer cannot contain loops (homeomorphic images of circles).
        \label{aaaa}
        \item[(b)] For every point $x \in G_\Sigma$ there exists a point $y \in M$ (may be not unique) such that $\dist (x, y) = r$ and $B_{r}(y)\cap \Sigma=\emptyset$.
         \label{bbbb}
        \item[(c)] For all $x \in S_\Sigma$ there exists an $\varepsilon>0$ such that $S_\Sigma \cap B_{\varepsilon}(x)$ is either a segment or a regular tripod, i.e.\ the union of three segments with an endpoint in $x$ and relative angles of $2\pi/3$. 
       
        \end{itemize}

Also, since $\Sigma$ is connected closed set of finite length, the following statements hold. 
 	\begin{itemize}
	\item[(d)]
	$\Sigma$ has a tangent line at almost every point of $\Sigma$ 
    \label{e}
    \item[(f)] Set $\Sigma$ is Ahlfors regular, id est there exists such constants $c>0$ and $C>0$ that for every point $x \in \Sigma$ or sufficiently small $\rho>0$  the inequality
    \[
    c\rho \leq \H(\Sigma \cap B_\rho(x))\leq C\rho.
    \]
    \label{j}
   In this work we find exact constants $c$ and $C$ for each point $x$: we will show that \[
   \H(\Sigma \cap B_\rho(x))=\oord_x\Sigma \rho+o(\rho)
   \]
   where $\oord_x\Sigma$ is equal to $1$, $2$ or $3$.
\end{itemize}

The main result of this article is formulated in Theorem~\ref{mainT1}: let $\Sigma$ be a maximal distance minimizer for a compact set $M \subset \mathbb{R}^2$. Then \begin{itemize}
\item[(i)] $\Sigma$ is a union of a finite number of injective images of the segment.
\item[(ii)] The angle between each pair of tangent rays at every point of $\Sigma$ is greater or equal to $2\pi/3$.
\item[(iii)] By (ii) the number of tangent rays at every point of $\Sigma$ is not greater than $3$. If it is equal to $3$, then there exists such a neighbourhood of $x$ that the arcs $\Sigma$ coincide with three line segments and by (ii) the pairwise angles between them are equal to $2\pi/3$. 
\end{itemize}

Theorem~\ref{mainT1} implies that the number of branching points is always finite for each maximal distance minimizer. Note that this is not true for the Steiner problem: an example of Steiner tree with infinite number of branching points can be found in~\cite{paolini2015example}.

We should also mention the problem of minimizing the average distance functional (for more details about this problem, see the survey~\cite {L} and~\cite{lemenant2011regularity}). Maximum and average distance minimizers have a number of similar geometric and topological properties (see~\cite{PaoSte04max}). The finiteness of the number of branching points for the minimizers of the average distance was proved in~\cite{BS1}.

The article actually proved a number of geometric properties of minimizers, but in order not to clutter up the narrative, we will not dwell on them in this work. The article is organized as follows. Section~\ref{stp} contains the statement of the Steiner problem and some technical statements about Steiner trees (the advanced reader can skip this chapter); Chapter~\ref{not} introduces used designations, basic definitions and elementary statements; Chapter~\ref{res} contains the statement and proof of the main result.



\subsection{Steiner problem} \label{stp}
Let the finite set of points $C \defeq \{ A_1, \dots, A_n \} \subset \R^2$ is given. Then the shortest (having a minimal one-dimensional Hausdorff measure) connected (contained) it set $S\subset \R$ is called a \textit{Steiner tree}. It is known that Steiner tree exists (but may be not unique) and coincides with a finite union of straight segments. So we can identify $S$ with a plane graph whose vertex set contains $ C $ and whose edges are rectilinear. This graph is connected and does not contain cycles, so it is a tree. It is known that the vertices of this graph cannot have degree greater than $ 3 $. Wherein only the vertices $ A_i $ can have degrees $ 1 $ and $ 2 $, the remaining vertices have degrees $ 3 $ and are called \textit{Steiner points} (or \textit{branching points}). Then the number of Steiner vertices does not exceed $ n-2 $. The pairwise angles between edges incident to an arbitrary vertex are at least $ 2 \pi / 3 $.

Thus, a Steiner point has exactly three edges with pairwise angles $ 2 \pi / 3 $. If all vertices $ A_i $ have degree $ 1 $, then the number of Steiner points is equal to $ n-2 $ (the converse is also true) and $ S $ is called a \textit{full} Steiner tree. A set, each connected component of which is a (full) Steiner tree, is called a \textit{(full) Steiner forest}.
For a given three-point set $C=\{A_1, A_2, A_3\}$ Steiner tree is unique and is a
\begin{itemize}
    \item union of two segments $[A_kA_l] \cup [A_lA_m]$, if $\angle A_k A_l A_m \geq 2\pi/3$, where $\angle A_kA_lA_m$ is a maximal angle of the triangle $\triangle A_1A_2A_3$, for $\{k,l,m\}=\{ 1,2,3\}$.  
    \item union of three segments $[A_1F] \cup [A_2F] \cup [A_3F]$ otherwise, where  $F$ is (unique) such point that \[\angle A_1FA_2= \angle A_2FA_3=\angle A_3FA_1=2\pi/3.\]
\end{itemize}

The proof of the listed properties of Steiner trees and other interesting information about them one can find in the book~\cite{hwang1992steiner} and in~\cite{gilbert1968steiner}. The problem can also be posed in a more general form: let $ C $ be a compact subset of the metric space, and required set $ S $ has the minimum one-dimensional Hausdorff measure among all sets $ S'$ such that $ S' \cup C $ is connected. In this case, the set $ \bar{S} $ will be called a solution to the Steiner problem and will have some properties. In particular, it is known that $ \bar S $ exists, $ \bar{S} $ does not contain cycles, and $ S \setminus C $ is at most countable union of geodesic curves of a metric space. See~\cite{PaoSte13Steiner} for more details.

 
Finally, by a \textit{local Steiner forest (tree)} we mean a (connected) compact set $ S $ such that for an arbitrary point $ x \in \Relint (S \setminus C) $ (where $ \Relint $ stands for the relative interior of the set) there exists an open neighborhood $ U \ni x $ such that $ S \cap \overline U $ coincides with the Steiner tree connecting the points $ S \cap \partial U $. A local Steiner tree has the following properties of a Steiner tree: it consists of a countable set of geodesic segments, i.e. in the case of the Euclidean plane, it consists of segments, and the angles between the segments incident to the same point of $ S $ are at least $ 2 \pi / 3 $.

\begin{proposition}
Let $ A $, $ B $ and $ C $ be three different points of the plane such that all angles of the triangle $ \triangle ABC $ are strictly less than $ 2 \pi / 3 $, and the lengths of the sides $ AB $ and $ BC $ are equal $ \varepsilon $. Then 
\[
2 \varepsilon - \H (St (A, B, C)) = d \varepsilon,
\]
where $ St(A, B, C) $ is a Steiner tree for the set $ \{A, B, C \} $ and $ d > 0 $ is a constant depending only on the angles of the triangle $ \triangle ABC $.
\label{angles4}
\end{proposition}

\begin{definition}
We define the ``wind rose'' as six rays from the origin, with an angle $ \pi / 3 $ between any two adjacent ones, equipped with weights (real numbers) that have the following property: the weight on each ray is the sum of the weights of the rays adjacent to it. (From this, in particular, it follows that the weights on opposite (forming a straight line) rays add up to zero.)
\label{roza_v}
\end{definition}
An example is shown in Figure~\ref{fig:roza1}. A similar construction, but for physical reasons, appears in~\cite{gilbert1968steiner}.

\begin{remark}
A full Steiner tree consists of segments belonging to three straight lines with pairwise angles $ \pi / 3 $. Let $ T $ be an arbitrary full Steiner tree. Take an arbitrary wind rose, consisting of rays, parallel to the segments of $ T $, and assign to each end vertex of $ T $ the weight of the ray of this wind rose, co-directed with the segment entering this vertex. Then the sum of numbers over all vertices of $ T $ will be zero.
\label{main_rose_property}
\end{remark}

\begin{remark}
Remark~\ref{main_rose_property} is also true for a local Steiner tree as well as for a full Steiner forest and a local Steiner forest (in the case of forests, multiple wind roses may be required to assign weights to).
\end{remark}

\begin{lemma}
Let $ S $ be a full Steiner forest and  let $ l $ be an arbitrary line such that no connected component of $ S $ is contained in $ l $. Then
\[
\sharp(\partial S \cap l) \leq 2\sharp(\partial S \setminus l),
\]
where $\partial S$ is a vertex set of $S$ 
\label{St_l_c}
\end{lemma}

\begin{proof}
Let $ S $ have a finite number of vertices. Let $ l^{+} $, $ l^{-} $ be open half-planes bounded by $ l $. It is easy to see that it is sufficient to separately prove the inequalities for $ \overline {S \cap l^{+}} $, $ \overline {S \cap l^{-}} $. Moreover, it suffices to prove the assertion for one connected component of these sets. Thus, we will prove the assertion for the closure of an arbitrary Steiner tree lying in an open half-plane bounded by $ l $.
We place the center of the wind rose in the same open half-plane, choose a wind rose such that all those and only those rays that intersect $ l $ have a positive weight: obviously, such a wind rose exists, since $ l $ intersect $ 2 $ or $ 3 $ in a row going ray. In the first case, we will equip them with weights $ 1, 2, 1 $, in the second $ 1, 1 $. Then the rest of the rays will have weights $ -1, -2, -1 $ or $ 0, -1, -1, 0 $. Let's assign weights to the leaf vertices in the same way as it was done in Remark~\ref{main_rose_property}. Then the sum over the endpoints of $ l $ is at least $ \sharp (S \cap l) $. Due to Remark~\ref{main_rose_property} the sum over all endpoints must be zero, so endpoints not belonging to $ l $ must be at least $ \frac{\sharp (S \cap l)}{2} $.
Let $ S $ have an infinite number of vertices (but still, by the general statement of Steiner problem, is contained in a compact and has a finite length). Assume the statement opposite to the lemma, that is, that $ \sharp (S \cap l) = \infty $, and $ \sharp (\partial S \setminus l) $, on the contrary, is finite. It suffices to prove the statement for each of the half-planes $ l^{+} $ and $ l^{-} $. Without loss of generality, there is a connected component $ S \cap l^{+} $ such that its closure $ T $ has an infinite number of vertices on $ l $. Let $ l'$ be parallel to $ l $ and contained in $ l^{+} $. We will move the straight line $ l'$ inside this half-plane, parallel to $ l $, and denote by $ S (l') $ the Steiner forest, which is the intersection of $ T $ and the half-plane bounded by $ l'$ and not containing $ l $. 
For an arbitrarily large $ N $, there is $ l_0 $ such that $ N <\sharp (l_0 \cap T) <\infty $. Let's choose this $ l_0 $ for $ N = 3 \sharp (\partial T \setminus l) $. This contradicts the already proven statement for $ S (l_0) $.

\end{proof}

\begin{corollary}
Let $ T $ be a full (that is, having no vertices of degree $ 2 $) Steiner tree, $ l $ an arbitrary line such that $ T \not \subset l $. Then
\[  
\sharp(\partial T \cap l)\leq 2\sharp(\partial T \setminus l).
\]
\label{St_l}
\end{corollary}
\section{Notations and definitions}\label {not}
For a given set $X$ the sets $\overline{X}$, $\Int (X)$ and $\partial X$ represent accordingly its closure, interior and boundary.

Let $B$ and $C$ be points. Then $[BC]$, $]BC]$ and $]BC[$ denote closed, semi-closed and open interval accordingly, $[BC)$ and $(BC)$ represent the ray and the line containing these points.

The symbol $\angle$, unless otherwise  stated, denotes angle belonging to $[0,\pi]$  for ray and points (or to $[0,-\pi/2]$ for lines). The symbol $\widehat{}$  unless otherwise  stated, denotes order sensitive angle belonging to $(-\pi;\pi]$ for ray and points (or to $[-\pi/2,-\pi/2]$ for lines), where the counterclockwise direction is positive. Both symbols are applicable to two lines, two rays or three points.
For a (usually finite) set $ M $, the number of elements of the set will be denoted by $ \sharp M $.
\begin{definition}
We say that the order of a point $ x $ in the set $ \Sigma $ is $ n $ (and write $\oord_x \Sigma = n $) if there exists such $ \varepsilon_0 = \varepsilon_0 (x)> 0  $, that for of any open set $ U $ such that $ x \in U $ and $ \diam U < \varepsilon_0 $ the cardinality of the intersection $ \sharp (\partial U \cap \Sigma) $ is at least $ n $, and for any $ \varepsilon <\varepsilon_0 $ there is an open set $ V \ni x $ with $ \diam V <\varepsilon $ and $ \sharp (\partial V \cap \Sigma) = n $.
\end{definition}

Note that, since $ \Sigma $ is connected, for any point $ x \in \Sigma $, the order $ \oord_x \Sigma> 0 $ (if $ \Sigma $ contains more than one point).

\begin{definition}
We say that $ \oordball_x \Sigma = n $ (the circular order $\oordball_x \Sigma $ of the point $ x $ in the set $ \Sigma $ is $ n $) if there exists $ \varepsilon_0> 0 $ such that for any $ \delta <\varepsilon_0 $ the inequality $ \sharp (\partial B_\delta (x) \cap \Sigma) \geq n $ holds, and for any $ \varepsilon <\varepsilon_0 $ there is such $ \eta <\varepsilon $ that $ \sharp (\partial B_{\eta} (x) \cap \Sigma) = n $.
\end{definition}

\begin{remark}
The inequality $\oordball_x \Sigma \geq \oord_x \Sigma$ holds.
\label{rem_ord}
\end{remark}

\begin{proof}
Suppose the opposite:
\[
k \defeq \oordball_x \Sigma  < n \defeq \oord_x \Sigma. 
\]
Then in view of the definition of $\oord$,
\[
\exists \varepsilon_0: \forall U\ni x: \diam U <\varepsilon_0 \quad \sharp (\partial U \cap \Sigma) \geq n
\]
Thus for each $\delta<\frac {\varepsilon_0} 2$ holds
\[
 \sharp (\partial B_{\delta}(x) \cap \Sigma) \geq n.
\]
But on the other hand, in view of the definition of $\oordball$, there exists such $\delta_0<\varepsilon_0/2$, that 
\[
 \sharp (\partial B_{\delta_0}(x) \cap \Sigma) =k<n.
\]
We got a contradiction.
\end{proof}

\begin{remark}
For every $x\in \Sigma$ the inequality $\oord_x\Sigma \leq \oordball_x\Sigma\leq 2\pi<7$ holds due to coarea inequality. Otherwise the set $\Sigma \setminus B_\varepsilon(x) \cup \partial B_\varepsilon(x)$ for sufficiently small $\varepsilon >0$ is better than $\Sigma$: it has not greater energy and strictly less length than $\Sigma$, and is still connected, so we have a contradiction with optimality of $\Sigma$.
\end{remark}

\begin{definition}
The \textit{arc} $ \breve {[ax]} \subset \Sigma $ ($\breve {]ax]}$ or $\breve {]ax[}$) 
 is the continuous injective image of the closed (semi-closed or open, respectively) interval. If inclusion and exclusion of the ends of the arc don't matter we will write $\breve{ax}$.

\end{definition}

\begin{definition}
We say that the ray $ (ax] $ is the \textit{tangent ray} of the set $ \Sigma $ with a vertex at the point $ x\in \Sigma $ (or shortly \textit{of $\Sigma$ at $x$}) if there exists non  stabilized sequence of points $ x_k \in \Sigma $ such that $ x_k \rightarrow x $ and $ \angle x_kxa \rightarrow 0 $.
Sometimes, instead of converging angles, it will be more convenient for us to write convergence of rays: $ (x_k x] \rightarrow (a x] $.
\end{definition}

Let's repeat the key definitions already introduced in the previous section.
 \begin{definition}
A point $x \in \Sigma$ is called \emph{energetic}, if for all $\rho>0$ one has
 	\[
		F_{M}(\Sigma \setminus B_{\rho}(x)) > F_{M}(\Sigma)
	\]
	where 
	\[
		F_{M}(\Sigma)\defeq \sup _{y\in M}\dist (y, \Sigma).
	\]
\end{definition}
The set of all energetic points of $\Sigma$ is denoted by $G_\Sigma$.

Each minimizer $\Sigma$ can be split into three disjoint subsets:
        \[
        \Sigma=E_{\Sigma}\sqcup\X_{\Sigma}\sqcup\S_{\Sigma},
        \]
        where $X_{\Sigma}\subset G_\Sigma$ 
is the set of isolated energetic points 
(i.e. every $x\in X_\Sigma$ is energetic and there is a $\rho>0$ possibly depending on $x$ such that $B_{\rho}(x)\cap G_{\Sigma}=\{x\}$), $E_{\Sigma} := G_{\Sigma}\setminus X_{\Sigma}$ is the set of non isolated energetic points and $S_\Sigma \defeq \Sigma \setminus G_\Sigma$ is the set of non energetic points also called the \textit{Steiner part}.

 For every point $x \in G_\Sigma$ there exists a point $y \in M$ (may be not unique) such that $\dist (x, y) = r$ and $B_{r}(y)\cap \Sigma=\emptyset$. 
 We will refer to this property as \textit{basic property of energetic points}.
 
\begin{remark}
Due to the definition of energetic points for each non energetic point $x$ there exists such $\rho>0$ that $\overline{B_\rho(x) \cap \Sigma}$ is a Steiner forest 
with $C \subset \partial B_\rho(x)$. Thus there exists such $\varepsilon>0$ that $\overline{B_\varepsilon(x) \cap \Sigma}$ is a segment or a regular tripod centered at $x$, in the case of a tripod we will call $x$ a  \textit{branching} (or Steiner) point. So  for each non energetic point $x$ there exists such $\rho_1>0$ that $\overline{B_\rho(x) \cap \Sigma}$ is a Steiner tree.
\label{stpt}
\end{remark}

\begin{definition}
We say that a ray $ (ax] $ is the \textit{energetic ray} of the set $ \Sigma $ with a vertex at the point $ x\in \Sigma $ if there exists non  stabilized sequence of energetic points $ x_k \in G_\Sigma $ such that $ x_k \rightarrow x $ and $ \angle x_kxa \rightarrow 0 $.

\end{definition}

\begin{remark}
A point $x \in \Sigma$ belongs to the set of non isolated energetic points $E_\Sigma$ iff there exists an energetic ray with vertex at $x$.
\end{remark}

\begin{remark}
Each energetic ray is also a tangent ray.
\end{remark}

\begin{remark}
Let $\{x_k\} \subset G_\Sigma$ and let $x\in E_\Sigma$ be the limit point of $\{x_k\}$:  $x_k \rightarrow x$.
By basic property of energetic points for every point $x_k \in G_\Sigma$ there exists a point $y_k \in M$ (may be not unique) such that $\dist (x_k, y_k) = r$ and $B_{r}(y_k)\cap \Sigma=\emptyset$. In this case we will say that $y_k$ \textit{corresponds} to $x_k$.

Let $y$ be an arbitrary limit point of the set $\{y_k\}$. Then the following statements are true:

\begin{enumerate}
\item Set $\Sigma$ doesn't intersect $r$-neighbourhood of $y$: $B_r(y) \cap \Sigma =\emptyset$. The point $y$ belongs to $M$ and corresponds to $x$.   
\item Let $(zx)$ be orthogonal to $(xy)$. Then it contains the energetic ray of $\Sigma$ with a vertex at $x$. 

Without loss of generality let $(zx]$ be energetic ray.
\item Let us define an \textit{orientation} (clockwise or counterclockwise) of $(zx]$ such way that the orientated angle in this direction from $(zx]$ to $(yx]$ is equal to $\pi/2$.
 \label{orientation}

It is easy to notice that then the orientated angle in this direction from $(zx]$ to another energetic ray (if it exists) with a vertex at $x$, is not less than $\pi$.

\item Let $x \in E_\Sigma$. Then there exists one or two energetic ray with vertex at $x$.
\label{netri}
\item
One energetic ray $(zx]$ can have two orientations (we say that it is orientated in the both directions or two-orientated) if there exist two different limit points of $y_k$, where $y_k$ corresponds to $x_k$. Otherwise it is one-orientated.

\item In the case of two-orientated $(zx]$ each energetic ray is perpendicular to the line, connecting the limit points of $y_k$. 
The angle between $(zx]$ and another energetic ray (if it exists) is equal to $\pi$. 

\end{enumerate}
\label{emptyball}
\end{remark}

\begin{definition}
We say that the line $ (ax) $ is the \textit{tangent line} of the set $ \Sigma $  at the point $ x\in \Sigma $ if for each non  stabilized sequence of points $ x_k \in \Sigma $ such that $ x_k \rightarrow x $ holds $\angle ((x_kx),(ax)) \rightarrow 0$. 
\end{definition}

\begin{definition}
We say that the ray $ (ax] $ is a \textit{one-sided tangent} of the set $ \Sigma $ at the point $ x\in \Sigma $ if there exists such a connected component of $\Sigma \setminus \{x\}$, let us call it $\Sigma'$, that  for each non  stabilized sequence of points $ x_k \in \Sigma' $ such that $ x_k \rightarrow x $ holds $ \angle x_kxa \rightarrow 0 $. 
\end{definition}

\begin{remark}
If there exists a tangent line of the set $\Sigma$ at the point $x$ then there also exists a one-sided tangent of $\Sigma$ at $x$, but not vice versa. If there exists a one-sided tangent of the set $\Sigma$ at the point $x$ then there exists also a tangent ray of $\Sigma$ at $x$, but not vice versa.
\end{remark}


\color{black}

\begin{definition}
We will write $f(\rho)=o_\rho(1)$ if and only if $f(\rho)\rightarrow 0$ as $\rho \rightarrow 0$.
\end{definition}

\begin{definition}
For a given ray $(ax]$ and a number $\rho>0$ we will denote by a set of \textit{cusps}  $\mathbb{Q}_{\rho}^{a,x}$ the set consisting of such 
closed simply connected sets that $\forall Q \in \mathbb{Q}_{\rho}^{a,x}$ the following statements hold: 
\begin{itemize}
    \item $Q \subset \overline{B_{\rho}(x)}$;
    \item $Q \cap \partial B_{\rho'}(x)$ for each $0<\rho'\leq \rho$ is an arc (actually it is sufficient to state it only for $\rho'=\rho$)
    \item $(ax] \cap \overline{B_{\rho}(x)} \subset Q$;
    \item for an arbitrary sequence of points $z_k \in Q$ converging to $x$ holds $\angle z_k x a \rightarrow 0$.
The ray $(ax]$ will be called the \textit{axis of cusp $Q$}.

\end{itemize}
\end{definition}

\begin{remark}
Let $x\in E_\Sigma$ has a two-orientated energetic ray (id est there exists an energetic ray $(ix]$ orientated in the both directions). Then
\begin{enumerate}
    \item There exist two points $y_1, y_2 \in M$ such that  $\Sigma \cap B_r(y_1)=\Sigma \cap B_r(y_2) =\emptyset$, $|y_1x|=|y_2x|=r$, $x \in [y_1y_2]$ and $(ix) \perp (y_1y_2)$.
    \item For a sufficiently small $\rho>0$ there exist two cusps which union contains $\Sigma \cap B_\rho(x)$. One cusp is from set $Q^{i,x}_\rho$ and another one --- from set of cusps with axis, complementing ray $(ix]$ to line (ix). For example it can be the closures of the connected components of the set \[ \overline{B_\rho(x)}\setminus B_r(y_1) \setminus B_r(y_2) \setminus \{x\}.\]
    \item There is exactly one or two tangent rays of $\Sigma$ at $x$. If two then both of them should be contained in $(ix)$ (note that in this case there still can be only one energetic ray of $\Sigma$ at $x$). Anyway $\Sigma$ has a tangent line at the point $x$. 
\color{black}
\end{enumerate}

\label{star}
\end{remark}

\begin{lemma}
\begin{enumerate}
\item Let a set $W\ni x$ has tangent line $(ax)$ and moreover for each non  stabilized sequence of points $ x_k \in W$ such that $ x_k \rightarrow x $ holds $ \angle x_kxa \rightarrow 0 $. . Then for sufficiently small $\varepsilon>0$ there exists such cusp  $Q \in \mathbb{Q}_\varepsilon^{a,x}$ that $(W \cap \overline{B_\varepsilon(x)})\subset Q$ and $(W \cap \partial Q) \subset (\partial B_\varepsilon(x) \cup \{x\})$.
\label{kas} 
\item Let $\Sigma$ at point $x$ has exactly one energetic ray $(ax]$. Then for sufficiently small $\varepsilon>0$ there exists such cusp $Q \in \mathbb{Q}_\varepsilon^{a,x}$ that $G_\Sigma \cap  \overline{B_\varepsilon(x)} \subset Q$.
\label{en}
\item Let $\Sigma$ at point $x$ has two energetic rays $(ax]$ and $(bx]$.  
Then for sufficiently small $\varepsilon>0$ there exist such cusps $Q^a \in \mathbb{Q}_\varepsilon^{a,x}$ and $Q^b \in \mathbb{Q}_\varepsilon^{b,x}$, that  $(G_\Sigma \cap  \overline{B_\varepsilon(x)}) \subset Q^{a} \cup Q^{b}$.
\label{reb}
\item For each  $x\in E_\Sigma$ $\rho>0$ there exist such $\beta(\rho)>0$ that all energetic points from the ball $B_{\rho}(x)$ belong to the corner with opening $\beta(\rho)$ and with some energetic ray $(ax]$ as a bisector, where $\beta=o_\rho(1)$. Also it is easy to note that in view of compactness $E_\Sigma$ the value $\beta(\rho)$ doesn't depend on $x$.

 \label{BB}
\end{enumerate}
\label{con}
\end{lemma}
\begin{proof}
The proof of item~\ref{kas}. The existence of a one-sided tangent implies that there exists strictly decreasing sequence $r_k\rightarrow 0$ and non strictly decreasing sequence $s_k \rightarrow 0$ such that
\[
\forall z \in B_{\rho_k}(x) \cap W \text{ holds } \angle zxa <s_k.
\]
Denote 
\[
Q(\rho,s,x,a) \defeq \{z \big| |zx|\leq \rho, \angle zxa \leq s\}.
\]
Then the set \[
\bigcup_{k=1}^{\infty} \left(Q(\rho_k,s_k,x,a) \setminus B_{\rho_{k+1}}(x) \right)
\]
belongs to $\mathbb{Q}_{\rho_1}^{a,x}$.
The proof of item~\ref{en}. As there exits only one energetic ray at point $x$ for sufficiently small  $\varepsilon>0$ the line $(ax]$ is a tangent line for the set $G_\Sigma \cap B_\varepsilon(x)$ and the condition on item~\ref{kas} holds and condition of the statement~\ref{kas} holds. Then the previous item implies the statement.

The proof of item~\ref{reb} is similar and also follow from the definition of an energetic ray and item~\ref{kas}. 
The last item~\ref{BB} follows from remark~\ref{emptyball} item~\ref{netri},  items~\ref{en} and~\ref{reb} of this lemma and from the definition of cusps. 
\end{proof}

\begin{definition}
In the notations of the item~\ref{reb} of Lemma~\ref{con} an energetic point $\bar x$ from the set $G_\Sigma \cap Q^i$ will be called \textit{the energetic point of the energetic ray (ix]}, where $i=\{a,b\}$. In this case the ray $(ix]$ will be called \textit{the energetic ray, closest to $\bar x$}.
This definition doesn't depend of chosen cusps and have sense only in the neighbourhood of $x$. 
\end{definition}
\begin{lemma}
Let the intersection of a connected component of $\Sigma\setminus \{ x\}$ with the sufficiently small closed neighbourhood of $x$ be an arc and contain infinitely many energetic points of the energetic ray $(ix]$ and do not contain another energetic points. Then there exists a cusp with $(ix]$ as an axis, which contains this arc.
\label{dug_ok}
\end{lemma}
\begin{proof}
For sufficiently small $\rho>0$ the intersection of the connected  component of $\Sigma\setminus \{ x\}$ with a ball $\overline{B_\rho(x)}$ is an arc. Denote this arc by $\breve{[sx]}$. Let an injective function $f: [0;1] \rightarrow \R^2$ parameterizes the arc $\breve{[sx]}$. By the definition of an energetic ray the ray $(ix]$ is a one-sided tangent for the set $\breve{[sx]} \cap G_\Sigma$. I.e. for each sequence of energetic points  $\{g_j\}_{j=1}^\infty \subset \breve{[sx]} \cap G_\Sigma$ converging to $x$ the sequence of angles with the energetic ray $\angle g_jxi$ converges to $0$. But as energetic points forms a closed set, for each nonenergetic point $z \in \breve{[sx]}\setminus G_\Sigma$ there exists  such $c,d \in [0,1]$ that  $f(c),f(d) \in \breve{[sx]}  \cap G_\Sigma$, $f((c,d))\ni z$ and $f((c,d)) \subset \breve{[sx]}\setminus G_\Sigma$. Then the arc $f([c,d])$ should be a line segment. Thus $\angle zxi \leq \max (\angle f(c)xi, \angle f(d)xi)$.
So for each sequence of points $\breve{[sx]}$ converging to $x$ there exists the sequence of energetic points  $\breve{[sx]}$ converging to $x$  with larger angles with $(ix]$, still converging to $0$. Thus $(ix]$ is one-sided tangent for the whole $\breve{[sx]}$. Applying of the item~\ref{kas} Lemma~\ref{con} completes the proof.

\end{proof}

\begin{definition}
Let $M$ be a planar compact set. A connected set $\Sigma$ is called \textit{local minimizer} if $M \subset \overline{B_r(\Sigma)}$ and there exists an $\varepsilon >0$ such that for every connected $\Sigma_0$ such that $M \subset \overline{B_r(\Sigma_0)}$ and $\diam (\Sigma \triangle \Sigma_0) \leq \varepsilon$ one has $\H(\Sigma) \leq \H(\Sigma_0)$.
\end{definition}
\begin{remark}
As far as all arguments are local all statements of this work are true not only for maximal distance minimizers but also for local minimizers. 
\label{lcl}
\end{remark}

\section{Results} \label{res}

It is known (see~\cite{kuratowski2014topology}) that as $\Sigma$ has a finite length then the following lemma holds:


\begin{lemma}
Let the order of a point $ x $ in the set $ \Sigma $ is $ n $. Then for a sufficiently small $\varepsilon=\varepsilon(x) > 0$ the set $\Sigma \cap B_\varepsilon (x)$ contains $n$ arcs with one end at $\partial B_\varepsilon(x)$ and another at the $x$ intersecting only at the point $x$.
\label{ord_arcs}
\end{lemma}

In this work it is proved that $\Sigma \cap B_\varepsilon (x)$ coincides with these arcs. 

\subsection{The main theorem}

\begin{theorem2}
Let $\Sigma$ be a maximal distance minimizer for a compact set $M \subset \mathbb{R}^2$. Then \begin{itemize}
\item[(i)] $\Sigma$ is a union of a finite number of arcs (injective images of the segment $[0;1]$).
\item[(ii)] The angle between each pair of tangent rays at every point of $\Sigma$ is greater or equal to $2\pi/3$.
\item[(iii)] By (ii) the number of tangent rays at every point of $\Sigma$ is not greater than $3$. If it is equal to $3$, then there exists such a neighbourhood of $x$ that the arcs in it coincide with line segments and by (ii) the pairwise angles between them are equal to $2\pi/3$. 
\end{itemize}
\label{mainT1}
\end{theorem2}


\begin{proof}
Let $U\ni x$ be such a sufficiently small domain that
\[
\sharp (\partial U \cap \Sigma)=\oord_x\Sigma.  
\]
During the proof we will impose new restrictions on $U$ and it may be changed to the smaller one. Let $\Sigma'$ be a closure of a connected component of  $\Sigma \cap U \setminus \{x\}$. In view of the Corollary~\ref{oord_en} the number of such components is no more than three. Each component contains a point $x$ and certainly one point at the border $\partial U$. Without loss of generality the neighbourhood $U$ could be considered such small that every connected component of  $\Sigma \cap U \setminus \{x\}$ either doesn't contain any energetic points or contains infinitely many of them.  

If $\Sigma'$ doesn't contain any energetic points except $x$ then, in view of the Remark~\ref{stpt}, it is a line segment connecting $x$ with a point at the $\partial U$.

Let $\Sigma'$ contains infinitely many energetic points. Then for sufficiently small $U$ in view of the key Lemma~\ref{eva} all these energetic points are of the same energetic ray, denote it by $(ix]$. Thus in view of Lemma~\ref{uggi} it should be an arc and by Lemma~\ref{dug_ok} should be contained in some cusp with $(ix]$ as an axis. 

Then each connected component $\Sigma \cap U \setminus \{ x\}$ has one-sided tangent at the point $x$. Wherein, by the Lemma~\ref{angles2}, angles between these one-sided tangent is larger or equal $2\pi/3$. Then, if $x$ is an energetic point, there exist only one or two connected components   $\Sigma \cap U \setminus \{x\}$ (see~\ref{star} item~\ref{orientation}). In total, the set $\Sigma \cap U $ is one of three sets:
\begin{itemize}
    \item  A union of three line segments with one end at $x$ and another at $\partial U$, with pairwise angle $2\pi/3$ at the point $x$ (a regular tripod with center at $x$). 
    \item An arc with both ends at $\partial U$, which has two one-sided tangents at $x$, with angle greater or equal to $2\pi/3$ between them.
    \item  An arc, connecting $x$ and $\partial U$, having one-sided tangent at $x$
\end{itemize}
The theorem is proved as $\Sigma$ is a compact set. 
\end{proof}
\begin{corollary}
Let $\Sigma$ be a local minimizer for a compact set $M \subset \mathbb{R}^2$. Then, in view of remark~\ref{lcl}, if  \begin{itemize}
\item[(i)] $\Sigma$ is a union of a finite number of arcs (injective images of the segment $[0;1]$).
\item[(ii)] The angle between each pair of tangent rays at every point of $\Sigma$ is greater or equal to $2\pi/3$.
\item[(iii)] By (ii) the number of tangent rays at every point of $\Sigma$ is not greater than $3$. If it is equal to $3$, then there exists such a neighbourhood of $x$ that the arcs in it coincide with line segments and by (ii) the pairwise angles between them are equal to $2\pi/3$. 
\end{itemize}
\end{corollary}

We now turn to statements used in the proof of the theorem.

\subsection{Lemmas}
\begin{lemma}
Let $\Sigma$ have two one-sided tangents at $x$. Then the angle between these rays are at least $2\pi/3$. In particular they can not coincide.
\label{angles2}
\end{lemma} 

\begin{proof}
At first we need the following statements:
\begin{enumerate}
\item For a given $r>0$ and an arbitrary segment $[uv]$ of sufficiently small length $\varepsilon>0$ there exists such set $S^{uv}$ that  $|\H(S^{uv})|=o(\varepsilon)$ and $\overline {B_r([uv])} \subset \overline{B_r(S^{uv})}$. Moreover, $S^{uv}=S^u \cup S^v$, where $w\in S^w$ and $S^w$ is connected, with $w\in \{u,v\}$.   
\begin{proof}
Let $C$ be a point equidistant from $u$ and $v$, located at a distance of $r$ from the straight line $(uv)$. And let $C_w$ be such points that $C_w \in [wC]$ and $|C_wC|=r$ with $w\in \{u,v\}$. In addition let the points $C_u^-$ and $C_v^-$ be symmetric reflections of the points $C_u$ and $C_v$ with respect to the line $(uv)$ (see Fig.~\ref{Suv}).
 \definecolor{ffqqqq}{rgb}{1.,0.,0.}
\definecolor{ttzzqq}{rgb}{0.2,0.6,0.}
\definecolor{uuuuuu}{rgb}{0.26666666666666666,0.26666666666666666,0.26666666666666666}
\definecolor{ududff}{rgb}{0.30196078431372547,0.30196078431372547,1.}
\begin{figure}
\begin{tikzpicture}[line cap=round,line join=round,>=triangle 45,x=1.0cm,y=1.0cm]

\clip(-3.6987668539577405,-1.732942517054489) rectangle (3.406482070449608,3.7497281006907395);
\draw[line width=0.8pt,color=ffqqqq,fill=ffqqqq,fill opacity=0.10000000149011612] (12.,-2.) -- (13.,-2.) -- (13.,-2.);
\draw [line width=2.pt,dash pattern=on 6pt off 6pt] (-2.,0.) circle (4.cm);
\draw [line width=2.pt,dash pattern=on 6pt off 6pt] (2.,0.) circle (4.cm);
\draw [line width=2.8pt,color=ttzzqq] (-2.,0.)-- (2.,0.);
\draw [line width=2.pt,dash pattern=on 6pt off 6pt] (0.,3.464101615137755) circle (3.464101615137755cm);
\draw [line width=2.pt,dash pattern=on 3pt off 3pt] (-2.,0.)-- (0.,3.464101615137755);
\draw [line width=2.pt,dash pattern=on 3pt off 3pt] (2.,0.)-- (0.,3.464101615137755);
\draw [line width=2.4pt,color=ffqqqq] (-2.,0.)-- (-1.7320508075688772,0.4641016151377546);
\draw [line width=2.8pt,color=ffqqqq] (2.,0.)-- (1.7320508075688772,0.4641016151377546);
\draw [line width=2.4pt,color=ffqqqq] (-2.,0.)-- (-1.7320508075688772,-0.4641016151377546);
\draw [line width=2.8pt,color=ffqqqq] (2.,0.)-- (1.7320508075688772,-0.4641016151377546);
\draw [line width=2.pt,dash pattern=on 6pt off 6pt] (15.,2.) circle (4.cm);
\draw [line width=2.pt,dash pattern=on 6pt off 6pt] (15.,-6.) circle (4.cm);
\draw [line width=2.4pt,color=ffqqqq] (12.218925567339127,-0.875)-- (12.421875,-0.6651963313000041);
\draw [line width=2.8pt,color=ffqqqq] (15.,-2.)-- (15.,-1.7080992435478317);
\draw [line width=2.4pt,color=ffqqqq] (12.218925567339127,-0.875)-- (12.218925567339127,-1.1669007564521667);
\draw [line width=2.8pt,color=ffqqqq] (15.,-2.)-- (14.797050567339124,-2.209803668699993);
\draw [line width=2.4pt,color=ffqqqq] (12.218925567339127,-3.125)-- (12.218925567339125,-2.8330992435478346);
\draw [line width=2.8pt,color=ffqqqq] (15.,-2.)-- (14.797050567339124,-1.7901963313000036);
\draw [line width=2.4pt,color=ffqqqq] (12.218925567339127,-3.125)-- (12.421874999999996,-3.334803668699996);
\draw [line width=2.8pt,color=ffqqqq] (15.,-2.)-- (15.,-2.291900756452172);
\draw [shift={(15.,-2.)},line width=2.pt,color=ffqqqq]  plot[domain=2.757195879094154:3.525989428085432,variable=\t]({1.*3.*cos(\t r)+0.*3.*sin(\t r)},{0.*3.*cos(\t r)+1.*3.*sin(\t r)});
\draw [line width=2.8pt,color=ffqqqq] (12.,-2.)-- (15.,-2.);

\begin{scriptsize}

\draw [fill=uuuuuu] (0.,3.464101615137755) circle (2.0pt);
\draw[color=uuuuuu] (0.34915595533223287,3.45987191990982186) node {$C$};

\draw [fill=ffqqqq] (-1.7320508075688772,0.4641016151377546) circle (2.0pt);
\draw [fill=red] (1.7320508075688772,0.4641016151377546) circle (2.0pt);
\draw [fill=ffqqqq] (-1.7320508075688772,-0.4641016151377546) circle (2.0pt);
\draw [fill=ffqqqq] (1.7320508075688772,-0.4641016151377546) circle (2.0pt);

\draw[color=ffqqqq] (-1.4894097830443668,0.2815381922731308) node {$C_u$};
\draw[color=ffqqqq] (2.2988196905990326,0.5837966819709189) node {$C_v$};
\draw[color=ttzzqq] (-2.2,0) node {$u$};
\draw[color=ttzzqq] (2.2,0) node {$v$};
\draw [fill=ttzzqq] (-2,0) circle (2.0pt);
\draw [fill=ttzzqq] (2,0) circle (2.0pt);

\draw[color=ffqqqq] (-1.4894097830443668,-0.2815381922731308) node {$C^-_u$};
\draw[color=ffqqqq] (2.2988196905990326,-0.5837966819709189) node {$C^-_v$};

\end{scriptsize}
\end{tikzpicture}

\caption{Constructing of the set $S^{uv}$.}
\label{Suv}
\end{figure}
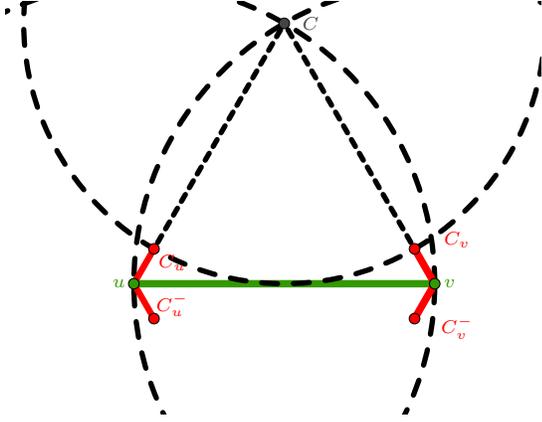
Then we can define $S^u \defeq [uC_u] \cup [uC_u^-]$ and $S^v \defeq [vC_v] \cup [vC_v^-]$. It is easy to see that these sets meet all conditions.
\end{proof}
\item For an arbitrary cusp $Q\in Q_\varepsilon^{a,x}$ there exists such a set $S^Q$ that $|\H(S^Q)|=o(\varepsilon)$ and $\overline{B_r(Q)} \subset \overline{B_r(S^Q)}$. Moreover, $S^Q=S_1 \cup S_2$, where $S_1 \supset (\partial Q \cap \partial B_\varepsilon(x))$, $x \in S_2$ and both $S^1$ and $S^2$ are connected (see Fig.~\ref{TQ}).
\begin{proof}
Let points $T_1, T_2 \in \partial B_\varepsilon(x)$ be such that $\angle T_1xa = \max_{z\in Q} \angle zxa$ and $T_2$ be a reflection of $T_1$ with respect to $(ax)$. And let $\breve{T_1T_2}$ be the smallest arc of the $\partial B_\varepsilon(x)$. Then, since $ Q $ lies in the figure bounded by $ \breve {T_1T_2} \cup [T_1x] \cup [T_2x] $ the following statement is true:  
\[
\overline{B_r(Q)} \subset \overline{B_r(\breve{T_1T_2}\cup [T_1x] \cup [T_2x])}.
\]
It is easy to see that the set
\[
S^Q \defeq \breve{T_1T_2} \cup S^{T_1x} \cup S^{T_2x}
\]
meets all requirements.
\end{proof}
 \item For a cusp $Q\in Q_\varepsilon^{a,x}$ there exists such connected set $T^Q$ that $|\H(T^Q)|=\varepsilon+o(\varepsilon)$, $\overline{B_r(Q)} \subset \overline{B_r(T^Q)}$ and $T \supset ((\partial Q \cap \partial B_\varepsilon(x)) \cup \{x\})$ (see Fig.~\ref{TQ}).
 \begin{proof}
 In the notations of the previous item let us define $Z \defeq (ax] \cap \partial B_\varepsilon(x)$. Then the set 
 \[
 T^Q \defeq S^Q \cup [Zx].
 \]
 \end{proof}
meets all requirements.
\end{enumerate}

\begin{figure}
\center{\includegraphics[width=0.4\linewidth]{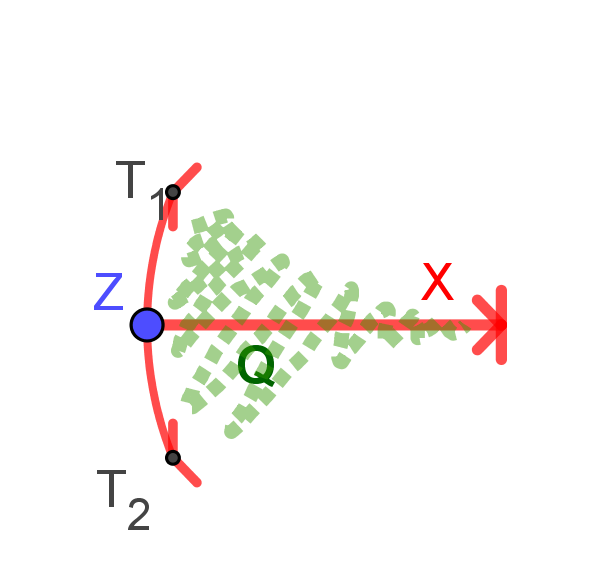}}
\caption{Construction of the sets $S^Q$, $T^Q$.}
\label{TQ}
\end{figure}

In view of Lemma~\ref{con} i.~\ref{kas} there exists such $\varepsilon_0>0$, 
that for an arbitrary $\varepsilon<\varepsilon_0$ inside the ball $B_\varepsilon(x)$ each of these two components lie inside a cusp with its one-sided tangent as an axis.  
Define the closures of intersections of connected components with $B_\varepsilon(x)$ by $W_1$ and $W_2$. Clearly 
\[
\H(W_i)\geq \varepsilon
\]
for $i=\{1,2\}$.

Assume that these two connected components have the same one-sided tangent $(ax]$. Then in view of i.~\ref{kas} Lemma~\ref{con} for $W=W_1 \cup W_2$ there exists such containing them cusp $Q \in  Q_\varepsilon^{a,x}$ such that $\partial Q \cap (W_1 \cup W_2) \subset \partial B_\varepsilon(x)$.

Then the set 
\[
\Sigma \setminus (W_1 \cup W_2) \cup T^Q
\]
is better than $\Sigma$: it is connected, has a shorter length and not larger energy than $\Sigma$. This contradicts the assumption that two connected components can have the same one-sided tangent line.

Let us now show that the angle between one-sided tangents is at least $ 2 \pi / 3 $. Assume the opposite, that is, that the angle between the one-sided tangents $ (ax] $ and $ (bx] $ (of respectively $W_1$ and $W_2$) is strictly less than $ 2 \pi / 3 $. Denote $A \defeq (ax] \cap \partial B_\varepsilon(x)$ and $B \defeq (bx] \cap  \partial B_\varepsilon(x)$. Then, by Proposition~\ref{angles4}, the set
\[
\Sigma \setminus (W_1 \cup W_2) \cup S^{Q_a}\cup S^{Q_b} \cup St(A,B,x)
\]
is connected, has a shorter length and not larger energy than $\Sigma$. Contradiction.

\end{proof}
\begin{corollary}
Let $\Sigma \setminus x$ have one-sided tangent $(ax]$. Then for sufficiently small $\varepsilon>0$ the statement $\H(W \cap B_\varepsilon(x))=\varepsilon+o(\varepsilon)$ holds, where $W$ is a closure of connected component of $\Sigma \setminus \{ x\}$. As in other case the set $\Sigma \setminus (B_\varepsilon(x) \cap W) \cup T^Q$ for some containing $B_\varepsilon(x)\cap W$ cusp $Q\in Q^{a,x}_\varepsilon$ is better than $\Sigma$: it is connected, has a shorter length and not larger energy than $\Sigma$. A contradiction.
\label{len0}
\end{corollary}


\begin{proposition}
For an arbitrary point $x\in S_\Sigma \sqcup X_\Sigma$ there exists such $\varepsilon=\varepsilon(x)>0$ that the set $ \Sigma \cap B_\varepsilon (x) $ consists of $ \oord_x \Sigma $ line segments with one end at $ x $ and another one at $\partial B_\varepsilon(x)$, with pairwise angles between them at least $ 2 \pi / 3 $. Moreover, in the case of $ x \in S_ \Sigma $, the set $ \overline{B_\varepsilon (x)} \cap \Sigma $ is a segment or a regular tripod centered at $ x $, and in the case of $ x \in X_\Sigma $ --- a segment with an end at $ x $ and another one at $\partial B_\varepsilon(x)$, or the union of two segments with one end at $ x $ and another one at $\partial B_\varepsilon(x)$, with angle at least $ 2 \pi / 3 $ between them.

\label{discr_str}
\end{proposition}

\begin{proof}
It is easy to see that for each $x \in \Sigma$ the inequality $\oordball_x \Sigma \leq 6$ holds. Assume the contrary, then for sufficiently small $\varepsilon>0$ the co-area inequality entails $\H(\Sigma \cap \overline{B_\varepsilon (x)}) \geq 7 \varepsilon$. But then the set $\Sigma' \defeq \Sigma \setminus B_\varepsilon(x) \cup \partial B_\varepsilon(x)$ is better than $\Sigma$: it is connected, its energy is not greater and its length is strictly less than of $\Sigma$. This gives a contradiction with the optimality of $\Sigma$.

Now let $x \in S_\Sigma \sqcup X_\Sigma$ and let $\varepsilon>0$ be such that $\Sigma \cap \overline{B_\varepsilon(x)} \setminus \{x\} \subset S_\Sigma$. Thus, there exists an infinite monotonically decreasing sequence $\varepsilon_i \rightarrow 0$ such that $\sharp (\partial B_{\varepsilon_i}(x) \cap \Sigma) = \oordball_x \Sigma \leq 6$ for every $i$. Without loss of generality let $\varepsilon_1<\varepsilon$. For numbers $l>k$ let us define the set
\[
\Sigma_{k,l} \defeq \Sigma \cap \overline{{B}_{\varepsilon_k}(x)} \setminus B_{\varepsilon_l}(x).
\]
Note that $\Sigma_{k,l} \subset S_\Sigma$ is a Steiner forest and all its vertices belong to $\partial B_{\varepsilon_k}(x) \cup \partial B_{\varepsilon_l}(x)$. Since $\Sigma_{k,l}$ has not more vertices than $13$, the number of its branching points is at most $11$ for each $l>k$ and $\Sigma_{k,l}$ consists of a finite number of line segments, less than $25$. Then there exists such a number $m>k$ that the set $\Sigma \cap \overline{B_{\varepsilon_m}(x)}$ does not contain a branching points or a maximal segment not containing the point $x$. So the number of segments in $\Sigma \cap \overline{B_{\varepsilon_m}(x)}$ is at most the number of segments of every set $\Sigma_{m,n}$, $n>m$ (which is not greater than $24$); in particular it is finite.    

In view of Lemma~\ref{angles2} the set $\Sigma \cap B_{\varepsilon_m}(x)$ should be either a segment with one end at the point $x$ and another at $\partial B_{\varepsilon_m}(x)$, or the onion of two such segments with the angle at least $2\pi/3$ between them, or the union of three such segments with angles $2\pi/3$ between them (i.e. regular tripod with the center at $x$). 
\end{proof}

Thus, it remains for us to deal with the behavior of $\Sigma$ in the neighborhoods of points of the set $ E_\Sigma $.

\begin{lemma}
The inequality $\oordball_x \Sigma\leq 3$ holds for every point $x \in \Sigma$.

\label{ord_en}
\end{lemma}

\begin{proof}
For a point $ x \in X_\Sigma \cup S_\Sigma $ the statement follows trivially from Proposition~\ref{discr_str}. Now let $ x \in G_\Sigma $ and $ \oordball_x \Sigma \geq 4 $. Then, in view of the co-area inequality for sufficiently small $\varepsilon>0$ one has $ \H (B_\varepsilon (x) \cap \Sigma) \geq 4 \varepsilon $. On the other hand, for a sufficiently small $ \varepsilon> 0 $, the length $ \H (\partial B_ \varepsilon (x) \setminus B_r (y)) <3.5 \varepsilon $, where $ y \in M $ is an arbitrary point corresponding to the point $ x $. But then the set 
\[
(\Sigma \setminus B_\varepsilon(x) \cup \partial B_\varepsilon(x) \setminus B_r(y)) \cup S^{T_1T_2},
\]
where $T_1, T_2 = \partial B_\varepsilon(x) \cap \partial B_r(y(x))$, and the set $ S ^ {T_1T_2} $ is defined in the proof of Lemma~\ref{angles2},
is better than the set $\Sigma $. This contradiction completes the proof of the lemma.
\end{proof}
Then in view of the Remark~\ref{rem_ord} the following corollary is true: 
\begin{corollary}
For every point $x\in \Sigma$ the inequality $\oord_x \Sigma \leq 3$ holds.
\label{oord_en}
\end{corollary}
\begin{lemma} 
Let $x\in E_\Sigma$, $\rho>0$, 
 $\bar{x}\in B_{\rho}(x)\cap G_\Sigma \setminus \{x\}$  --- an energetic point of a ray $(ix]$,
$\varepsilon<\min(|\bar x x|, \rho-|\bar x x|)$ and let point  
$w\in B_\varepsilon(\bar{x})$ be such that 
$(w\bar{x}) \parallel (ix)$, or $w=\bar{x}$. Then there exists such connected set $S=S(\bar{x},w,x,\varepsilon) \ni w$, that 
\[
F_M(\Sigma \setminus B_\varepsilon(\bar{x}) \cup S) \leq F_M(\Sigma)
\]
and $\H(S)=\varepsilon o_\rho(1)$.
\label{Losiu}
\end{lemma}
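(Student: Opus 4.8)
The plan is to produce $S$ as a localized modification: delete $\Sigma\cap B_\varepsilon(\bar x)$ and glue in a tiny connected set near $w$. Note that the lemma does \emph{not} require $(\Sigma\setminus B_\varepsilon(\bar x))\cup S$ to be connected (it only bounds $F_M$), which is what leaves room for an $S$ of length $\ll\varepsilon$. First I would fix, by basic property~(б), a point $y\defeq y(\bar x)\in M$ with $|y-\bar x|=r$ and $B_r(y)\cap\Sigma=\emptyset$. The essential geometric input is that the line $(\bar x y)$ is almost perpendicular to the edge line $(ix)$, the deviation from $\pi/2$ being $o_\rho(1)$: this I would get by a compactness argument — if it failed there would be $\rho_n\to0$, points $x_n\in E_\Sigma$ (using compactness of $E_\Sigma$) and energetic points $\bar x_n\in B_{\rho_n}(x_n)$ of edges $(i_nx_n]$ with angle bounded away from $\pi/2$; passing to limits ($\bar x_n\to x_*\in E_\Sigma$, $y(\bar x_n)\to y_*$ a corresponding point of $x_*$, edge directions converging) contradicts Remark~\ref{emptyball}(2), which says that at a point of $E_\Sigma$ the direction to a corresponding point is exactly perpendicular to the energetic edge. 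Then I put coordinates with $\bar x$ at the origin, $y=(0,-r)$, so $d\defeq(\bar x-y)/r=(0,1)$, and the edge direction horizontal up to $o_\rho(1)$; since $w$ lies on the line through $\bar x$ parallel to $(ix)$ with $|w-\bar x|<\varepsilon$ (or $w=\bar x$), it satisfies $|w_1|\le\varepsilon$ and $|w_2|=\varepsilon o_\rho(1)$.

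Next I reduce to a covering problem. As $\Sigma$ is a minimizer, $F_M(\Sigma)=r$, so it suffices to build a connected $S\ni w$ with $\H(S)=\varepsilon o_\rho(1)$ and $N\subset\overline{B_r(S)}$, where $N\defeq\{m\in M:\dist(m,\Sigma\setminus B_\varepsilon(\bar x))>r\}$: indeed $M\setminus\overline{B_r(\Sigma\setminus B_\varepsilon(\bar x))}=N$, and $\overline{B_r((\Sigma\setminus B_\varepsilon(\bar x))\cup S)}=\overline{B_r(\Sigma\setminus B_\varepsilon(\bar x))}\cup\overline{B_r(S)}$. For $m\in N$ the nearest point $\sigma=\sigma(m)$ of $\Sigma$ must lie in $\Sigma\cap B_\varepsilon(\bar x)$ (otherwise $\dist(m,\Sigma\setminus B_\varepsilon(\bar x))\le r$), so $|m-\sigma|\le r$; and since $\Sigma$ avoids $B_r(y)$ we have $\sigma\in K\defeq\overline{B_\varepsilon(\bar x)}\setminus B_r(y)$, i.e. $\sigma_2\ge-|\sigma|^2/(2r)\ge-\varepsilon^2/(2r)$ and $|\sigma|\le\varepsilon$.

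The crux — and the step I expect to be the main obstacle — is to show that $N$ is confined to a thin region near $w$: every $m\in N$ satisfies $|m-w|\le r+\varepsilon o_\rho(1)$, and the directions in which $|m-w|>r$ cluster near $\pm d$. The mechanism is a two‑empty‑ball wedge estimate at $\sigma$: the balls $B_r(y)$ and $B_{|m-\sigma|}(m)$ both avoid $\Sigma$, the second touching it at $\sigma$, so near $\sigma$ the set $\Sigma$ lies in the intersection of the two exterior half‑planes, a wedge of opening $\pi-\theta$ with $\theta$ the angle between the inward normals $\approx-d$ (from $B_r(y)$ at $\bar x$, up to $o_\rho(1)$ and $O(\varepsilon^2/r)$ errors) and $(m-\sigma)/|m-\sigma|$. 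A regular tripod fits in no half‑plane, so $\sigma$ is not a branching point (consistently with the general fact that a customer's nearest point is never a branching point); and when $\oordball_\sigma\Sigma\ge2$, the set $\Sigma$ near $\sigma$ is not contained in an arbitrarily thin wedge (co‑area, together with the already established local structure of~$\Sigma$), so $\theta$ is bounded away from $\pi$, whence $m-\sigma$ makes angle $\le\pi/3+o_\rho(1)$ with $-d$, i.e. is nearly vertical. Since $\sigma-w$ is then nearly horizontal ($\sigma\in K$, $w$ on the near‑horizontal line through $\bar x$), expanding $|m-w|^2=|m-\sigma|^2+2(m-\sigma)\cdot(\sigma-w)+|\sigma-w|^2$ with $|m-\sigma|\le r$, $|\sigma-w|\le2\varepsilon$ and a cross term of size $\varepsilon r\,o_\rho(1)$ gives $|m-w|\le r+\varepsilon o_\rho(1)$. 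What remains — $\sigma$ an endpoint of $\Sigma$, or an irregular energetic point of order two where the thin‑wedge exclusion is not yet available — I would handle by first shrinking $\varepsilon$ slightly so that $\partial B_\varepsilon(\bar x)\cap\Sigma$ is finite, noting that a customer served through such a tip but not through the arc leaving it (which lies in $\Sigma\setminus B_\varepsilon(\bar x)$ past the exit point) is again trapped in the thin region, and otherwise absorbing the finitely many such tips into $S$ with extra bowties of total length $o(\varepsilon)$.

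Finally, knowing that $N\subset\overline{B_r(w)}$ together with a shell of width $\varepsilon o_\rho(1)$ concentrated near the directions $\pm d$, I would take $S$ to be a connected tree issuing from $w$ consisting of $O(1)$ segments of length $\varepsilon o_\rho(1)$ in those directions, with the tips placed exactly at distance $r$ from the extreme points of that shell — the ``$C$‑point'' device already used to build $S^{uv}$ in the proof of Lemma~\ref{angles2} — so that $\overline{B_r(S)}\supset N$ while $\H(S)=\varepsilon o_\rho(1)$. Then $F_M((\Sigma\setminus B_\varepsilon(\bar x))\cup S)\le r=F_M(\Sigma)$, proving the lemma. The only point requiring care beyond the wedge estimate is the uniformity of all the $o_\rho(1)$ bounds in $\bar x$, $\varepsilon$ and $w$, which is exactly what the compactness of $E_\Sigma$ in the first step supplies.
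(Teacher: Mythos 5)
Your reduction (connectivity of $\Sigma\setminus B_\varepsilon(\bar x)\cup S$ not required; it suffices to cover the orphaned customers $N$, which is exactly the paper's set $U(\bar x,\varepsilon)$, by $\overline{B_r(S)}$) and your final construction of $S$ follow the paper's scheme, but the step you yourself flag as the crux --- confining $N$ to $|m-w|\le r+\varepsilon\, o_\rho(1)$ with directions pinched within $o_\rho(1)$ of the perpendicular $\pm d$ --- is not delivered by the mechanism you propose, and this is a genuine gap. Your two-empty-ball wedge at the nearest point $\sigma$ can at best force the angle between $m-\sigma$ and $-d$ to be at most $\pi/3+o_\rho(1)$: the wedge must have opening at least $2\pi/3$ by Lemma~\ref{angles2}, so nothing smaller than $\pi/3$ comes out. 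An angle of order $\pi/3$ is not ``nearly vertical''; the cross term $(m-\sigma)\cdot(\sigma-w)$ is then only bounded by a constant times $r\varepsilon$, not by $r\varepsilon\, o_\rho(1)$, and you obtain merely $|m-w|\le r+O(\varepsilon)$. With that bound the set $S$ has to sweep an annular shell of width comparable to $\varepsilon$, so $\H(S)$ comes out of order $\varepsilon$ rather than $\varepsilon\, o_\rho(1)$, which destroys the lemma: its entire use downstream is that the repair cost is $o_\rho(1)\varepsilon$, strictly smaller than the length saved inside $B_\varepsilon(\bar x)$. A secondary defect of the same step: $B_r(y)$ touches $\Sigma$ at $\bar x$, not at $\sigma$, and $\sigma$ may lie at distance up to about $\varepsilon$ outside $\partial B_r(y)$, so at the infinitesimal scale at which tangent rays and the $2\pi/3$-angle property live, the ball $B_r(y)$ imposes no half-plane constraint at $\sigma$ at all; even the $\pi/3$ wedge statement is therefore not a legitimate local estimate without further work.

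What is missing is the non-local input the paper uses precisely for this estimate: a customer in $U(\bar x,\varepsilon)$ must stay at distance greater than $r$ from every point of $\Sigma\setminus B_\varepsilon(\bar x)$, in particular from $x$ itself and from the other energetic points of the edge $(ix]$ accumulating at $x$, which by Lemma~\ref{con} lie essentially along the edge direction at distances between $\varepsilon$ and $\rho$ from $\bar x$. If the direction from $\bar x$ to a customer deviates from the perpendicular by a fixed $c>0$, then an elementary sine/cosine computation (carried out in the paper in two cases, one using $x$, the other using a nearby energetic point $x_k$ of the same edge and a limit of customers) places that customer strictly inside $B_r(x)$ or $B_r(x_k)$, a contradiction; this is what yields the $o_\rho(1)$ angular pinching, hence $U(\bar x,\varepsilon)\subset\Lambda\cap B_{r+\varepsilon o_\rho(1)}(w)$ and the $o_\rho(1)\varepsilon$ length of $S$. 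This far-field argument cannot be recovered from the local picture at $\sigma$ alone, which is the only tool your proposal brings to bear. Your opening step (near-perpendicularity of $(\bar x\,y(\bar x))$ to $(ix)$ via compactness) and the covering construction at the end are fine, but without the estimate above the proposal does not prove the stated bound $\H(S)=\varepsilon\, o_\rho(1)$.
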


Note that the set $\Sigma \setminus B_\varepsilon(\bar{x}) \cup S$  does not have to be connected.

Usually we will use the following formulation of this lemma:
\begin{multline*}
 \forall c>0 \ \forall x \in E_\Sigma \  \exists \rho_0>0 \ \forall \rho<\rho_0 \ \forall \bar{x}\in B_\rho(x)\cap G_\Sigma \setminus \{x\} \  \exists \varepsilon_0>0:\\ \forall \varepsilon<\varepsilon_0 \  \forall w \in B_\varepsilon(\bar{x}): (w\bar{x}) \parallel (ix) \vee (w=\bar x) \  \exists \mbox{ connected set } S\ni w:\\ F_M(\Sigma \setminus B_\varepsilon(\bar{x}) \cup S) \leq F_M(\Sigma) \mbox{ and } \H(S)<c\varepsilon.
\end{multline*}
Also, in view of compactness of $E_\Sigma$ the second and third predicates can be swapped: \[\forall c>0 \ \exists \rho_0>0 \  \forall x \in E_\Sigma \ \dots.\] 

\begin{proof}


In the further proof, the point $ x \in E_\Sigma $ will be assumed to be fixed; all points and quantities will be assumed to correspond to the conditions of the lemma. For a positive number $ \varepsilon $ and a point $ \bar x $, define the set

\[
U(\bar{x},\varepsilon) \defeq \left \{ y\in M | \dist(y, \overline{B_{\varepsilon}(\bar{x})})\leq r, \dist(y, \Sigma \setminus \overline{B_{\varepsilon}(\bar{x})})> r \right \}.
\]

\begin{enumerate}
    \item If an energetic ray $(ix]$ is one-orientated, then for a sufficiently small $\rho>0$ and $\varepsilon>0$ the set $U(\bar{x},\varepsilon)$ is located in one half-plane bounded by $ (ix) $(the one into which the ray is orientated). 
    
    If an energetic ray $(ix]$ is two-orientated for each $\bar x$ either exist such 
     $\varepsilon>0$, that the set $U(\bar{x},\varepsilon)$ is located in one half-plane bounded by $ (ix) $
     or in both half-planes there exist different points $y (\bar {x}) \in M$ corresponding to $ \bar{x} $. In what follows, we will assume that $\rho> 0 $ and $ \varepsilon> 0 $ satisfy these conditions.
    \label{1115} 
    \begin{proof}
    The case of two-orientated $(ix]$ is clear.
    Let $(ix]$ be one-orientated. Without loss of generality we can consider $(ix]$ as a negative part of the abscissa, oriented to upper half-plane, i.e. points corresponding to energetic points of ray $(ix]$ lie upper abscissa. 
    Assume the contrary to the statement being proved, i.e. for each $\rho>0$ and $\varepsilon_0>0$ there exists such positive $\varepsilon<\varepsilon_0$ and such energetic point $\bar x \in E_\Sigma \cap B_\rho(x) \setminus \{ x\}$ that the intersection of $U(\bar{x}, \varepsilon)$ with lower half-plane is not empty. So for a point $\bar x$ there exists a sequence $\{\varepsilon_k\}$ monotonically decreasing to $0$ such that each set $U(\bar{x}, \varepsilon_k)$ contains some points of the lower half-plane, denote an arbitrary such point in each set by $u_k$. Note that $U(\bar{x}, \varepsilon_k)\supset U(\bar{x}, \varepsilon_{k+1})$ as $\varepsilon_k>\varepsilon_{k+1}$. Let $u = \lim_{k \rightarrow \infty} u_k$. As $M$ is a closed set $u\in M$. Moreover as $|u_k\bar x|\leq r+\varepsilon_k$, the inequality $|u\bar x| \leq r$ holds. 
    Assume $B_r(u)\cap \Sigma$ is not empty, thus, in view of connectivity of $\Sigma$, the intersection contains a point $x_1 \neq \bar{x}$. Consider such number $L$ that for every $l\geq L$ the inequalities $\varepsilon_l< |x_1\bar x|$ and $|u_lu|<r-|ux_1|$ hold. But then $x_1 \notin B_{\varepsilon_l}(\bar x)$ and $|\dist(x_1,U(\bar{x},\varepsilon_l))|<r$, which contradicts to the definition of the set $U(.,.)$. Thus $B_r(u)\cap \Sigma$ is empty, which means that $u$ corresponds to $\bar x$. Then there exists a sequence of energetic points of the ray $(ix]$ converging to $x$ with a points of $M$, lying in a lower half-plane, corresponding to them. The contradiction to the definition of one-orientated energetic ray.

    \end{proof}
        \begin{remark}
Obviously, $y(\bar{x}) \in \overline{U(\bar{x},\varepsilon)}$, where $y(\bar{x})\in M$ is an arbitrary point, corresponding to $\bar{x}$.
    \label{rem_c}
    \end{remark}
    
    \item Clearly, as $\Sigma \cap B_\varepsilon(\bar{x}) \subset B_\varepsilon(\bar{x}) \setminus B_r(y(\bar{x}))$, the inequality 
    $\angle y(\bar{x})\bar{x}s>\pi/2-o(\varepsilon)$ holds for every $s \in B_\varepsilon(\bar{x}) \cap \Sigma$ and $y(\bar{x})$, corresponding to $\bar{x}$. In other words
    \[
    \forall c>0 \  \exists \rho>0 \ \exists \varepsilon>0: \forall \bar x \in B_\rho(x) \ \forall s \in B_\varepsilon(\bar x) \quad \angle y(\bar x) \bar x s >\frac{\pi}{2}-c\varepsilon.
    \]
    
    \label{113}
    \item For each $u\in U(\bar{x},\varepsilon)$ the following statement is true: $|\angle((u\bar{x}),(ix))-\frac{\pi}{2}|=o_\rho(1)$. 

    \label{1145}
    \begin{proof}
Assume the contrary. Then there exists such number $c>0$, sequence  $\{x_k\}$ of energetic points of the energetic ray $(ix]$ converging to $x$, sequence $\varepsilon_k>0$ decreasing to $0$ and sequence of points $u_k \in U(x_k, \varepsilon_k)$ such that $|\angle((u_k x_k),(ix))-\frac{\pi}{2}|>c$ and, moreover,  $B_{\varepsilon_l}(x_l)\cap B_{\varepsilon_m}(x_m)=\emptyset$ for different numbers $l$, $m$ and $\varepsilon_k < \frac{|x_k x|}{2}$. Note that $x \notin B_{\varepsilon_k}(x_k)$, and thus by the definition of $U$ for each number $k$ the inequality $\dist(x, U(x_k,\varepsilon_k))>r$ holds. 

In this case, at least one of two options is realized:\begin{itemize}
\item [I:] There exists a subsequence $\{I_{j=1}^\infty\}$ of numbers
such that $\angle([u_{I_k} x_{I_k}),(ix])<\frac{\pi}{2}-c$ for each number $k$. As $\{x_{I_k}\}$ are energetic points of an energetic ray $(ix]$ converging to $x$ there exists such number $N$ that for every number $\forall k >N$ the inequalities $\angle ixx_{I_k}<c/4$ and $\angle x_{I_k}u_{I_k}x<c/4$  hold. Thus $\angle u_{I_k} x_{I_k} x\leq \angle([u_k x_{I_k}),(ix])+\angle ixx_{I_k}<\frac{\pi}{2}-\frac{3c}{4}$. Then
\[
|u_{I_k}x|=\frac{|u_{I_k}x_{I_k}| \sin \angle u_{I_k}x_{I_k}x}{\sin \angle x_{I_k}xu_{I_k}}<\frac{(r+\varepsilon_k) \sin(\frac{\pi}{2}-\frac{3c}{4})}{\sin(\frac{\pi}{2}+\frac{c}{2})}=\frac{(r+\varepsilon_k) \cos \frac{3c}{4}}{\cos \frac{c}{2}}<r,
\]
where the last inequality is true for a sufficiently large  $ k $ (depending on $ c $), since $ c> 0 $ can be considered less than $ \pi / 2 $. This contradicts to $\dist(x, U(x_{I_k},\varepsilon_{I_k}))>r$.

\item [II:] There exists a subsequence $\{J_{j=1}^\infty\}$ of numbers such that $\angle ([u_{J_k} x_{J_k}),(ix])>\frac{\pi}{2}+c$. Let us consider a limit of the subsequence: $u \defeq \lim_{k \rightarrow \infty} u_{J_k}$. Then $|ux|\leq r$ (and hence $|ux|=r$) and $\angle uxi \leq\pi/2-c$. 

 As $\{x_{J_k}\}$ are energetic points of an energetic ray $(ix]$ converging to $x$ there exists such number $N_1$ that for every number $\forall k >N_1$ the inequalities $\angle ixx_{J_k}<c/4$ (and, therefore $\angle uxx_k<\pi/2 - 3c/4$) and $\angle x_{J_k}ux\leq c/4$ (for $u=\lim_{l \rightarrow \infty} u_{J_l}$)  hold. Then
 \[
|x_{J_k}u|=\frac{\sin \angle x_{J_k}xu}{\sin \angle ux_{J_k}x}|ux|<\frac{\cos \frac{3c}{4}}{\cos{\frac{c}{2}}}r.
\]
There also exists such $N_2$ that for every $m\geq N_2$ the inequality  $|u_{m}u|<r \left (1-\frac{\cos \frac{3c}{4}}{2\cos{\frac{c}{2}}}\right)$ holds. Then for different $J_k\neq m$ with $k>N_1$ the following is true: \[|u_{m}x_{J_k}|< |x_{J_k}u|+|u_{m}u|\leq \frac{\cos \frac{3c}{4}}{\cos{\frac{c}{2}}}r + \left (1-\frac{\cos \frac{3c}{4}}{2\cos{\frac{c}{2}}}\right) < r.\]
We have a contradiction with $x_{J_k} \notin B_{\varepsilon_m}({x_m})$.
 
\end{itemize}
    \end{proof}
    
\begin{remark}
       In view of Remark~\ref{rem_c} also $|\angle((y(\bar{x})\bar{x}), (ix))-\frac{\pi}{2}|=o_\rho(1)$ holds. 
       \label{rem_cc}
\end{remark}
 
    \item  Clearly that for every $x_1 \in B_\varepsilon(\bar{x})$ the inequality $\dist(x_1, U(\bar{x},\varepsilon)) \geq r-\rho$ holds: otherwise $\dist(x, U(\bar{x},\varepsilon)) \leq r$, which is impossible as $\varepsilon<|\bar{x}x|$ which entails $x \in \Sigma \setminus \overline{B_\varepsilon(\bar x)}$. 
    \label{777}
    \item Let  $\Lambda((nl), \gamma) \defeq \{z \ | \ \angle((zl),(ln)) \leq \gamma \}$. Then $U(\bar{x},\varepsilon) \subset \Lambda({(y(\bar{x}),w),o_\rho(1)})$ where $w$ is defined in the statement of the proven lemma.

    \begin{remark}
    Hereinafter, in case of $U(\bar{x},\varepsilon)$ located in one half-plane relative to $(ix)$, we could use instead of a set $\{z \ | \ \angle((zl),(ln)) \leq \gamma \}$ --- a set $ \{z \ | \ \angle zln \leq \gamma \}$.
    \end{remark}

   \begin{proof}
It is sufficiently to prove that for every $u\in U(\bar x, \varepsilon)$ the equality $\widehat{(y(\bar x)w),(uw)}=o_\rho(1)+\pi k$ holds for some $k\in \mathbf{Z}$.
In this item the angle between lines is supposed to be counterclockwise orientated and belongs to $[-\pi/2,pi/2]$. Then for every three lines $l_1$, $l_2$ and $l_3$ the equality $\widehat{l_1,l_2}+\widehat{l_2,l_3} = \widehat{l_1,l_3}+\pi k $ for some $k\in \mathbf{Z}$.

Applying item~\ref{777} to $x_1 \defeq \bar{x}$ 
   we get $\dist(\bar{x}, U(\bar{x},\varepsilon)) \geq r-\rho$. Then consider the triangle $\triangle u\bar x w$: 
\[
\sin (\angle \bar x u w) =  \frac{|\bar x w|}{|\bar x u|}\sin (\angle \bar x w u)\leq \frac{\varepsilon}{r-\rho}=O(\varepsilon)=o_\rho(1). 
\]
Thus $\angle((uw),(u\bar x))=|\angle \bar x u w| =o_\rho(1)$ and particularly $\angle ((y(\bar x)w),(y(\bar x) \bar x))=\angle \bar x y(\bar x) w =o_\rho(1)$. 

So in view of item~\ref{1145} and Remark~\ref{rem_cc} the equality
\begin{equation*}
\widehat{(y(\bar x) w),(uw)}=\widehat{(y(\bar x)w),(y(\bar x) \bar x)}+\widehat{(y(\bar x) \bar x),(ix)}+ \widehat{(ix),(u \bar x)}+ \widehat{(u\bar x),(uw)}+ \pi l=o_\rho(1)+\pi k
\label{ang} 
\end{equation*} holds for some $l, k\in \mathbf{Z}$.



   
   \end{proof}

    \label{115}
    \color{black}
    \item For every  $u\in U(\bar{x}, \varepsilon)$ there exists such $v_1$ (does not have to belong to $\Sigma$) that $v_1 \in B_\varepsilon(\bar{x})$, $|v_1u|\leq r$, $\angle ((v_1\bar{x}),(ix))=o_\rho(1)$.

    \begin{proof}
    For every $u\in U(\bar{x}, \varepsilon)$ there exists such $v \in \Sigma \cap B_\varepsilon(\bar{x})$ that $|vu| \leq r$. Let $y(\bar x)$ be a point of $U(\bar x, \varepsilon)$ corresponding to $\bar x$ and located at the same half-plane as $u$ with respect to $(ix)$.
    Define $v_1 \defeq v$ in case that $\pi/2> \angle y(\bar{x})\bar{x}v$. Note that $\angle y(\bar{x})\bar{x}v>\pi/2-o_\varepsilon(1)$ by item~\ref{113}. And in the case $\angle y(\bar{x}) \bar{x} v \geq \pi/2$ one can define $v_1 \in [vu]$ such way that $(v_1\bar x) \perp ( x y(\bar x))$.

    \end{proof}

    \label{117}
    \item The inequality $|wu|\leq r+o(\varepsilon)+\varepsilon o_\rho(1)$ holds. Thus $U(\bar{x},\varepsilon) \subset B_{r+\varepsilon o_\rho(1)}(w)$.
    \label{118}
    \begin{proof}
Note that by items~\ref{1145} and~\ref{117} lines $(u\bar x)$, $(uw)$, $(uv_1)$ are almost perpendicular to $(ix)$ and lines $(\bar xw)$, $(\bar xv_1)$ are almost parallel to $(ix)$.  Id est $|\angle((ux),(ix))-\pi/2|, |\angle((uw),(ix))-\pi/2|, |\angle((uv_1),(ix))-\pi/2|=o_\rho(1)$
and $\angle((\bar x w),(ix)), \angle((\bar x v_1),(ix))=o_\rho(1)$.
Thus $|\angle uv_1\bar x-\pi/2|=o_\rho(1)$ and $|\angle u\bar xw-\pi/2|=o_\rho(1)$. 

Then applying law of cosines to the triangle $\triangle uv_1 \bar x$ we get $|u\bar x|^2=|uv_1|^2+|v_1\bar x|^2-2|uv_1||v_1\bar x| \cos \angle uv_1\bar x$. Which entails $|u\bar x|^2\leq r^2+\varepsilon^2+o_\rho(1)\varepsilon$,
so $|u\bar x|\leq r+o_\rho(1)\varepsilon$.

Now by law of cosines for the triangle $\triangle uw\bar x$: $|uw|^2=|u\bar x|^2+|w\bar x|^2-2|u\bar x||w\bar x|\cos(\angle u\bar xw)$.
Then $|uw|^2\leq ( r+o_\rho(1)\varepsilon)^2+\varepsilon^2+\varepsilon o_\rho(1)$, which implies $|uw|\leq r+o_\rho(1)\varepsilon$.

    \end{proof}
    \item
 Clearly, $U(\bar{x},\varepsilon) \subset \Lambda({(y(\bar{x}),w),o_\rho(1)})\cap B_{r+o_\rho(1)\varepsilon}(w)$.
    \label{119}
    \color{black}
    \item There exists a connected set $S\ni w$ such that $\H(S)=\varepsilon o_\rho(1)$ and 
    \[
    (\Lambda ((y(\bar{x})w), \gamma) \cap B_{a}(w))\subset \overline{B_r(S)},
    \]
where $\gamma=o_\rho(1)$, $a=r+\varepsilon o_\rho(1)$.

    \label{120}
\begin{proof}
Let us construct $S$ in the following way: 
\[ 
S \defeq \left(( (wy(\bar x)) \cap B_{a-r}(w)) \cup \partial  B_{a-r}(w))\right)\cap \Lambda ((y(\bar{x})w), \gamma).
\] 
$S$ obviously satisfies both conditions.

\end{proof}    
\begin{remark}
It is easy to see that in case $U(\bar{x},\varepsilon)$ is located in one half-plane with respect to $(ix)$ (and hence with respect to a line $l_w$ containing $w$ and parallel to $(ix)$) the set $S$ can be chosen half as long: only one part of two symmetrical about $l_w$ is sufficient.
\end{remark}
\end{enumerate}
Items~\ref{119}, \ref{120} and the definition of the set $U(\bar{x},\varepsilon)$  imply the statement of the lemma. 
\end{proof}

Hereinafter we will write $o_\rho(1)$-almost, if something is true to within $o_\rho(1)$. 

\begin{corollary}
Item~\ref{1145} of Lemma~\ref{Losiu}, in view of the Remark~\ref{emptyball}, entails that the energetic rays of an arbitrary point $\bar{x} \in E_\Sigma \cap B_\rho(x) $ are $o_\rho(1)$-almost parallel to $(ix)$, id est $|\angle((ix),l_{\bar x})=o_\rho(1)|$ where $l_{\bar x}$ is a line containing an energetic ray of the set $\Sigma$ with a vertex at the point $ \bar x$.
\label{alm_parr}
\end{corollary}

\begin{lemma} For an arbitrary $x\in E_\Sigma$ there exists such $\rho>0$, that for every $\bar{x}\in B_{\rho}(x)\cap G_\Sigma \setminus \{x\}$ the inequality $\oordball_{\bar{x}}\Sigma \leq 2$ holds. 
\label{ordball_}
\end{lemma}

\begin{proof}
Let $n \defeq \oordball_{\bar x}\Sigma$ and let $\varepsilon>0$ be such a sufficiently small number that 
\[
\sharp (\partial B_{\varepsilon}(\bar x) \cap \Sigma) = n. 
\]
Then, in view of the definition of the circular order and of co-area inequality the following is true: $\H(B_\varepsilon(\bar x)\cap \Sigma) \geq n \varepsilon$. 
As $\bar{x}$ is an energetic point, these intersection is empty $B_r(y(\bar x)) \cap \Sigma =\emptyset$. So all $n$ points of  $\Sigma \cap \partial B_\varepsilon(\bar x)$ are located at the arc with size $\pi+o(\varepsilon)$.

Let $n \geq 4$. There exists such $\varepsilon_0$ that for every positive number $\varepsilon<\varepsilon_0$ the inequality $\H(\partial B_\varepsilon(\bar x) \setminus B_r(y(\bar x)))<3.5 \varepsilon$ holds. Clearly, for every $\varepsilon$ and $y(\bar x)$ there exists such point $w \in \partial B_\varepsilon(\bar x) \setminus B_r(y(\bar x))$, that $(w\bar x) \parallel (ix)$. Then in view of Lemma~\ref{Losiu} there exists such a connected set $S \ni w$, that $\H(S)<\varepsilon/4$ and $F_M(\Sigma \setminus B_\varepsilon(\bar x)\cup S) \leq F_M(\Sigma)$. Thus the set 
\[
\Sigma \setminus B_\varepsilon(\bar x) \cup (\partial B_\varepsilon(\bar x) \setminus B_r(y(\bar x))) \cup S,
\]
is strictly shorter than $\Sigma$ and has not larger energy. So we have a contradiction with an optimality of $\Sigma$.  

Let now $n=3$. Let us denote points $\Sigma \cap \partial B_\varepsilon(\bar x)$ by $A_1$, $A_2$ and $A_3$.  For each location of these points there exists two of them (without loss of generality let it be $A_1$ and $A_2$) form with $\bar x$ an isosceles triangle with an angle at the apex less than $\angle A_1 \bar x A_2 \leq \pi/2+o(\varepsilon)$. Then, in view of Proposition~\ref{angles4} the set $St(A_1, \bar x, A_2) \cup [A_3 \bar x]$ has the length not greater than $(3-c)\varepsilon$ where $c>0$ is a constant and $St(A_1, \bar x, A_2)$ is a Steiner tree for these three points. Thus the set
\[
\Sigma \setminus B_\varepsilon(\bar x) \cup (St(A_1, \bar x, A_2) \cup [A_3 \bar x] \cup S),
\]
where $S$ is constructed in Lemma~\ref{Losiu} for $w=\bar x$, is strictly shorter than $\Sigma$ and has not greater energy. The contradiction with the optimality of $\Sigma$ implies $n\leq 2$.
\end{proof}

\begin{corollary}
In view of~\ref{rem_ord} for arbitrary $x\in E_\Sigma$ there exists such $\rho >0$, that for every $\bar x \in G_\Sigma \cap B_\rho(x)\setminus \{x\}$the inequality $\oord_{\bar{x}}\Sigma \leq 2$ holds.
\end{corollary}
\begin{proposition}

For an arbitrary point $x\in E_\Sigma$ we will prove some facts concerning energetic points in its sufficiently small neighbourhood.
\begin{enumerate}
\item\label{W}
The following holds: $|\angle ((s^\varepsilon \bar x), (ix))-\pi/2|=o_\rho(1)$ if $s^\varepsilon \defeq \partial B_{\varepsilon}(\bar x) \cap \Sigma$ and $B_\varepsilon(\bar x) \subset (B_\rho(x)\setminus \{x\})$.

In other words, for each $c>0$ there exists such $\rho>0$ that for every $\bar{x} \in B_\rho(x) \setminus \{ x\}$ such that $\oordball_{\bar x}\Sigma=1$, for every $\varepsilon<\min(|\bar x x|, \rho-|\bar x x|)$ such that $\sharp(\partial B_{\varepsilon}(\bar x) \cap \Sigma)=1$ the inequality $|\angle ((s^\varepsilon \bar x), (ix))-\pi/2|<c$ holds, where $s^\varepsilon = \partial B_{\varepsilon}(\bar x) \cap \Sigma$.
\begin{proof}
Assume the contrary, id est there exists such $c>0$ that for every $\rho>0$ there exists such $\varepsilon_0>0$ that $\bar x \in \Sigma \cap B_{\rho-\varepsilon_0}(x) \setminus \overline{B_{\varepsilon_0} (x)} $ and the inequality $|\angle ((s_0\bar x), (ix))-\frac{\pi}{2}| > c$ holds for $s_0 \defeq \partial B_{\varepsilon_0}(\bar x) \cap \Sigma $. 

Let the point $w^0$ be the foot of the perpendicular from $s_0$ to the line, parallel $(ix)$ and containing $\bar x$:  $(w^0\bar x)\ ||\ (ix)$ and $(w^0s_0) \perp (ix)$. Then we denote the set $S \ni w^0$ as in Lemma~\ref{Losiu} for $w=w^0$. Then the length of this set is small: $\H(S)=o_\rho(1)\varepsilon_0$.

At the same time the inequality
\[
|w^0s_0|=\sin \angle((s_0\bar x],(ix]) \cdot |s_0\bar x|< \cos c |s_0\bar x| < (1 - \frac{c^2}{2}) \varepsilon_0
\]
holds for sufficiently small $c$. Herewith $\H(\Sigma \cap B_{\varepsilon_0}(\bar x)) \geq \varepsilon_0$. Thus \[
\H(\Sigma \setminus  B_{\varepsilon_0}(\bar x) \cup [s_0w^0] \cup S)\leq \H(\Sigma)-\varepsilon_0+\left(1-\frac{c^2}{2}\right)\varepsilon_0+o_\rho(1)\varepsilon_0<\H(\Sigma).
\]
Hence $\Sigma \setminus  B_{\varepsilon_0}(\bar x) \cup [s_lw^l] \cup S$ is a connected set with the length less than and energy not greater than $\Sigma$. The contradiction with optimality of $\Sigma$ completes the proof.
\end{proof}
\item\label{WW}
Let $x \in E_\Sigma$. Then for sufficiently small $\rho>0$ for each $\bar x \in G_\Sigma \cap B_\rho(\Sigma)\setminus \{ x\}$ with a circular order  $\oordball_{\bar x}\Sigma = 2$, there exists such a  small $\varepsilon>0$ that for every $s^l, t^l \defeq \partial B_{\varepsilon_l}(\bar x) \cap \Sigma$ where $\varepsilon_l \leq \varepsilon$ the inequalities $\angle s^l\bar xt^l > \frac{2\pi}{3} - \varepsilon_lo_\rho(1)$, $\angle ((s^l\bar x), (ix)) \leq \pi/3 +o_\rho(1)$ and $\angle ((t^l\bar x), (ix)) \leq \pi/3 +o_\rho(1)$ hold. 
\begin{proof}
The second and third inequalities follow from the first one and the fact that there exists such a point $y(\bar x)$ that $B_r(y(\bar x)) \cap \Sigma = \emptyset$ and $|\angle ((y(x)x),(ix)) - \pi/2| =o_\rho(1)$. The first inequality is true otherwise the connected set $\Sigma \setminus B_{\varepsilon_l}(\bar x) \cup St(s^l,t^l,\bar x) \cup S$, where $S \ni \bar x$ with length $\varepsilon_l o_\rho(1)$ is defined in Lemma~\ref{Losiu} for a point $w=\bar x$, and $St(s^l,t^l,\bar x)$ is a Steiner tree connecting points $s^l$, $t^l$ and $\bar x$ has not larger energy and strictly less length than $\Sigma$, which is impossible.
\end{proof}

\item\label{WWW}
Let $x \in E_\Sigma$. Then for sufficiently small $\rho>0$ for each $\bar x \in G_\Sigma \cap B_\rho(\Sigma)\setminus \{ x\}$ with a circular order  $\oordball_{\bar x}\Sigma = 2$, there exists such a  small $\varepsilon>0$, that for every $u \in \Sigma \cap B_\varepsilon(\bar x)$ the inequality $\angle((u \bar x),(ix)) \neq \pi/2$ holds.

In other words if the point $\bar x \in G_\Sigma \cap B_\rho(\Sigma)\setminus \{ x\}$ has a circular order $\oordball_{\bar x}\Sigma = 2$, there doesn't exist any consequence $\{u_m\}\subset \Sigma$ converging to $\bar x$ such that $\angle((u_m\bar x),(ix))=\pi/2$. 



\begin{proof} 


Assume the contrary to the statement of the item: that there exists such sequence $\{u_m\}$, converging to $\bar x$ that $\angle((u_m\bar x),(ix))=\pi/2$ for each $m$. 
Let $\{\varepsilon_l\}$ be such a decreasing to $0$ consequence that $\sharp (\partial B_{\varepsilon_l}(\bar x) \cap \Sigma )=2$ and $\varepsilon_1 < \min (|\bar x x|, \rho-|\bar x x|)$. Denote  $s^l, t^l \defeq \partial B_{\varepsilon_l}(\bar x) \cap \Sigma$. Then, from item~\ref{WW} the inequalities $\angle s^l\bar xt^l > \frac{2\pi}{3} - \varepsilon_lo_\rho(1)$, $\angle ((s^l\bar x), (ix)) \leq \pi/3 +o_\rho(1)$ and $\angle ((t^l\bar x), (ix)) \leq \pi/3 +o_\rho(1)$ hold.
Then 
\[
\sharp (\Sigma \cap \partial B_{|\bar x u_l|}(\bar x))\geq 3.
\]
for sufficiently small $\varepsilon_1>|u_1\bar x|$.

Note that for every $\varepsilon_l$ the following is true:
\begin{equation}
\H(B_{\varepsilon_l}(\bar x)\cap \Sigma)=2\varepsilon_l+o_\rho(1)\varepsilon_l.
\label{1000}
\end{equation}
Really $\H(B_{\varepsilon_l}(\bar x)\cap \Sigma) \geq 2\varepsilon_l$ in view of co-area inequality. And in other hand $\H(B_{\varepsilon_l}(\bar x)\cap \Sigma)\leq 2\varepsilon_l+o_\rho(1)\varepsilon_l$ because otherwise changing in the set $\Sigma$ the subset $\Sigma \cap B_{\varepsilon_l}(x)$ by the set $ [\bar xs^l]\cup [\bar x t^l]\cup S$, where $s_l,t_l=\partial B_{\varepsilon_l}(\bar{x})\cap \Sigma$, the $S$ is defined at the Lemma~\ref{Losiu} for $w=\bar x$ and has a length $\varepsilon_l o_\rho(1)$, saves the connectivity, reduces the length and not increases the energy, which is impossible.
So one can choose such small $\varepsilon_1$ that
\begin{equation}
\H(B_{\varepsilon_l}(\bar x)\cap \Sigma)<2\varepsilon_l \left (1+\frac{1}{36}\right).
\label{1010}
\end{equation}
Note then that there exists such sequence (add elements to the sequence if necessary) $\{\varepsilon_l\}$ that
\begin{equation}
\frac{\varepsilon_l}{\varepsilon_{l+1}}\leq \frac{18}{17}.
\label{1030}
\end{equation}
Really, assume the contrary, then there exists such $\varepsilon_l$ that $\sharp \left( \partial B_{\varepsilon_l}(\bar x)\cap \Sigma
\right) \geq 3$ for every $\varepsilon \in [\frac{17\varepsilon_l}{18},\varepsilon_l[$. But then, in view of the co-area inequality,
\[
\H(\Sigma \cap B_{\varepsilon_l}(\bar x))\geq \H(\Sigma \cap B_{\frac{17\varepsilon_l}{18}}(\bar x)) +\H(\Sigma \cap (B_{\varepsilon_l}(\bar x) \setminus  B_{\frac{17\varepsilon_l}{18}}(\bar x))\geq \varepsilon_l\left(1+\frac{1}{18}\right),
\]
which contradicts to \eqref{1010}.
Consider an arbitrary $u_m$ and choose such a number $l$ that 
\begin{equation}
0<\varepsilon_l-|\bar xu_m|\leq \frac{1}{18}\varepsilon_l.
\label{1040}
\end{equation}
Such $\varepsilon_l$ exists in view of the inequality~\eqref{1030}. Also we for every $\delta \in ]|\bar x u_m|, \varepsilon_l[$ the inequality $\sharp \left( \partial B_{\delta}(\bar x)\cap \Sigma
\right) \geq 3$ holds (one can add new element to the sequence if necessary).
Let \[\gamma \defeq \max_{s \in \Sigma \cap \overline B_{\varepsilon_l}(\bar x): \sharp (B_{|\bar x s|}(\bar x) \cap \Sigma) <3)} \angle{((ix),(\bar x s)}.\] Then $\gamma\leq \frac{\pi}{3}+ o_\rho(1)$.
Note that~\eqref{1010} implies that
\[
\H(\Sigma \cap \Lambda((i \bar{x}),\gamma) \geq 2\varepsilon_l(1-\frac{1}{36})
\]
Let $q$ be a connected component of $\Sigma \setminus \Lambda((i \bar{x}),\gamma)$ containing $u_m$ (by point $i$ we mean sufficiently far point from $(ix]$).
Then 
\[
\H(q) \geq \dist (u_m, \Lambda((i\bar x),\gamma)) 
=\cos \gamma |u_m \bar x|> \frac{|u_m \bar x|}{3}.
\]
Thus
\[
\H(\Sigma \cap B_{\varepsilon_l}(\bar x))\geq \H(\Sigma \cap B_{\varepsilon_l}(\bar x)\cap \Lambda((i\bar x),\gamma))+\H(q)\geq 2\varepsilon_l(1-\frac{1}{36})+\frac{|u_m \bar x|}{3}>2\varepsilon_l(1+\frac{1}{36}),
\]
which contradicts to~\eqref{1010}. The last inequality follows from~\eqref{1040}.
\end{proof}
\item\label{WWWW}
For every $x\in E_\Sigma$  for each $c>0$ there exists such $\rho>0$, that for every $\bar{x} \in B_\rho(x) \setminus \{ x\}$ for every $\varepsilon$ such that $B_\varepsilon(\bar x)\subset B_\rho(x) \setminus \{x\}$ for $S,T \defeq \partial B_\varepsilon(\bar x)$ the inequality $|\angle ((S \bar x), (ix)) - \angle ((T \bar x), (ix))|<c$ holds.
In other words, $|\angle ((S \bar x), (ix)) - \angle ((T \bar x), (ix))| = o_\rho(1)$

\begin{proof}
Assume the contrary: let there exist such $c>0$, that
\[
\eta \defeq \angle((S\bar x), (ix))-\angle( (ix),(T\bar x)) \geq c
\]
for infinitesimal $\rho$.
Let $l$ be the line containing $\bar x$ and parallel to $(ix)$. Let point $T'$ be a reflection of $T$ relative to $l$. Then $|T'\bar x|=\varepsilon$ and
\[
|S \bar x|+|T \bar x|=|S\bar x|+|T' \bar x|=2\varepsilon.
\]
At the same time in view of law of cosines for triangle $\Delta S\bar xT'$
\[
|ST'|^2 = |S\bar x|^2+|\bar xT'|^2-2|S\bar x||\bar x T'|\cos \angle S\bar xT'=2\varepsilon^2(1-\cos \eta),
\]
as
\[
\angle S\bar xT'=\pi+\angle((T\bar x), (ix))-\angle( (ix),(S\bar x)).\]
Let $d \defeq [ST']\cap l$. Then $\angle((Sd], (ix))=\angle( (ix),(T'd])$ and 
\[(|S\bar x|+|T\bar x|)-(|Sd|+|dT|)= 2\varepsilon - |ST'|=\varepsilon(2-\sqrt 2+\sqrt 2 \cos \eta).\]

In view of Lemma~\ref{Losiu} for $w=d$ there exists a connected set $S \ni d$ such that  $\H(S)<(2-\sqrt 2+\sqrt 2 \cos \eta)\varepsilon$ and $F_M(\Sigma \setminus B_\varepsilon(\bar x) \cup S)\leq F_M(\Sigma)$. Then the connected set $\Sigma \setminus ([S\bar x]\cup[T\bar x]) \cup S \cup [sd] \cup [td]$ is better than $\Sigma$, as is energy is not greater and its length is strictly less. 
So we have a contradiction with optimality of $\Sigma$.
\end{proof}

\end{enumerate}
\label{items}
\end{proposition}

Now we can prove the following lemma.
\begin{lemma}
For every $x\in E_\Sigma$ and  $c>0$ there exists such $\rho>0$, that for every $\bar x \in G_\Sigma \cap B_\rho(x)  \setminus \{x\}$ and for the $(ix]$, closest to $\bar x$ energetic ray of $x$, the following statements are true: 
\begin{enumerate}
\item if $\oord_{\bar x}\Sigma=1$, then $\bar x \in X_\Sigma$ and for the segment $[s\bar{x}] \subset \Sigma$ the following inequality is true:  \[\angle((s\bar{x})(ix))>\pi/2-c.\] 
In other words the segment $[s\bar x]$ is $o_\rho(1)-$almost perpendicular to $(ix)$.
If $(ix]$ is one-oriented energetic ray, then for the point $y\defeq y(x)$, which is the limit of the points corresponding to the energetic points of the energetic ray $(ix]$ the following inequality is true:
 \[
 \angle ([s\bar{x}), [xy))<c;
 \]
\label{__1}
\item if $\bar x \in X_\Sigma$ and  $\oord_{\bar x} \Sigma=2$, then the angle of incidence is almost equal to the angle of reflection:
\[|\angle((s\bar{x}], (ix))-\angle( (ix),(t\bar{x}])|<c\mbox{, where } [s\bar{x}] \cup [t\bar{x}] \subset \Sigma;\]
\label{__2}
\item if $\bar {x} \in E_\Sigma$, then there exists such $\varepsilon>0$ that $B_\varepsilon(\bar x) \cap \Sigma$ is contained in the union of two cusps (and each cusp is not empty) with axes, almost parallel to $(ix)$, id est $|\angle((m\bar{x}],(ix))|<c$ and $|\angle((n\bar{x}],(ix))|<c$, where $\Sigma \cap B_\varepsilon(\bar x) \subset Q_\varepsilon^{m,\bar x}\cup Q^{n, \bar{x}}_\varepsilon$, for $ Q_\varepsilon^{m,\bar x} \in  \mathbb{Q}_\varepsilon^{m,\bar x}$, $ Q_\varepsilon^{n,\bar x} \in \mathbb{Q}_\varepsilon^{n,\bar x}$. 
\label{__3}
\end{enumerate}
\label{alm}
\end{lemma}
\begin{proof}
\begin{enumerate}

\item Lets prove that  
 $\oord_{\bar x}\Sigma = 1$ implies $\bar x\in X_\Sigma$. 
\label{QQ}
Assume the contrary: let $\bar x \in E_\Sigma \cap B_\rho(x) \setminus x$. 

In view of Lemma~\ref{ordball_} it is suffices to analyze the following cases:
\begin{itemize}
\item [(a)] Case $\oordball_{\bar x}\Sigma=2$.
Let $\{\varepsilon_l\}$ be a decreasing to $0$ sequence of such numbers that $\sharp (\partial B_{\varepsilon_l}(\bar x) \cap \Sigma )=2$ and $\varepsilon_1 < \min (\rho, \rho-|\bar x x|)$. Denote  $s^l, t^l \defeq \partial B_{\varepsilon_l}(\bar x) \cap \Sigma$. 
We know from the item~\ref{WW} of Proposition~\ref{items} the inequalities $\angle ((s^l\bar x), (ix)) \leq \pi/3 +o_\rho(1)$ and $\angle ((t^l\bar x), (ix)) \leq \pi/3 +o_\rho(1)$ hold.
Then, due to the unit order of $\bar x$ there exists a converging to $\bar x$ sequence of such points $\{u_m\}\subset \Sigma$ that \[\angle\left((u_l\bar x),(ix)\right)=\frac{\pi}{2}.\] So we have a contradiction to the item~\ref{WWW} of Proposition~\ref{items}.

\item [(b)] Case $\oordball_{\bar x}\Sigma=1$. 

Let $\varepsilon_l>0$ be a decreasing to $0$ sequence of such numbers that $\sharp (\partial B_{\varepsilon_l}(\bar x) \cap \Sigma )=1$ and $s_l\defeq \partial B_{\varepsilon_l}(\bar x) \cap \Sigma$. Then in view of item~\ref{W} of Proposition~\ref{items} each line $(s_l\bar x)$ is almost perpendicular to $(ix)$:
\begin{equation}
\left|\angle \left((s_l\bar x), (ix)\right)-\frac{\pi}{2}\right| =o_\rho(1).
\label{99}
\end{equation}
Note that for every $\varepsilon_l$ holds
\begin{equation}
\H(B_{\varepsilon_l}(\bar x)\cap \Sigma)=\varepsilon_l+o_\rho(1)\varepsilon.
\label{100}
\end{equation}
The inequality $\H(B_{\varepsilon_l}(\bar x)\cap \Sigma)\geq \varepsilon_l$ is true because of connectivity $\Sigma$ and the inequality in the other side is true in view of Lemma~\ref{Losiu}: otherwise one could replace $\Sigma$ by $\Sigma \setminus B_{\varepsilon_l}(\bar x) \cup [\bar{x}s_l]\cup S$, where $S$ is defined at Lemma~\ref{Losiu} for $w=\bar x$, and get a contradiction with optimality of $\Sigma$. 

So, for sufficiently small $\rho$  \begin{equation}
\H(B_{\varepsilon_l}(\bar x)\cap \Sigma)<\varepsilon_l \left(1+\frac{1}{18}\right).
\label{101}
\end{equation}
Then we can say (add elements to $\{\varepsilon_l\}$ if it is necessarily) that
\begin{equation}
\frac{\varepsilon_l}{\varepsilon_{l+1}}\leq \frac{18}{17}.
\label{103}
\end{equation}
Otherwise there exists such $\varepsilon_l$ that $\sharp ( \partial B_{\bar{\varepsilon}}(\bar x)\cap \Sigma)\geq 2$ for every $\bar{\varepsilon} \in [\frac{17\varepsilon_l}{18},\varepsilon[$. Thus, by co-area inequality, 
\[
\H \left(\Sigma \cap B_{\varepsilon_l}(\bar x)\right)\geq \H \left(\Sigma \cap B_{\frac{17\varepsilon_l}{18}}(\bar x)\right) +\H \left(\Sigma \cap B_{\varepsilon_l}(\bar x) \setminus  B_{\frac{17\varepsilon_l}{18}}(\bar x)\right)\geq \varepsilon_l \left (1+\frac{1}{18} \right),
\]
which contradicts to~\eqref{101}.
Note, as $\bar x$ has an energetic ray which is, in view of Corollary~\ref{alm_parr} $o_\rho(1)$-almost parallel to $(ix)$, there exists a sequence of energetic points $\{x_l\}$, converging to $\bar x$ such that $\angle ((x_l\bar x], (ix]) =o_\rho(1)+o_{|x_l\bar x|}(1)$.  The sequences $\{x_l\}$ and $\{ s_l \}$ with unit order $\oord_{\bar x}(\Sigma)=1$ imply the presence of also converging to $\bar x$ sequence of points $\{ u_l\}\subset \Sigma$ such that $\angle \left((u_l\bar x),(ix)\right) = \frac{\pi}{4}$. Consider an arbitrary element $u_m$ of this sequence. In view of the inequality~\eqref{103} there exists such number $l$ that
\begin{equation}
0<\varepsilon_l-|\bar xu_m|\leq \frac{1}{18}\varepsilon_l.
\label{104}
\end{equation}
Note also, that by~\eqref{99} the inequality
\[
\sharp (\partial B_{|\bar x u_m|}(\bar x)\cap \Sigma) \geq 2
\] 
holds.


Let 
\[
k:=\max_{s \in \Sigma \cap \overline{B_{\varepsilon_l}}: \sharp (\Sigma \cap _{|s \bar x|}(\bar x))=1} \angle((ix),(\bar xs)),
\]
then $|k-\pi/2|=o_\rho(1)$.
Define also $K := \Lambda((p \bar x),k-\pi/2)$, where $(p\bar x) \perp (ix)$. $K$ contains an area of possible location of points $s_l$.

Let $Q$ be a cusp or a union of two cusps with vertex at $\bar x$, containing all energetic points of  $B_{\varepsilon_l}(\bar x)$, then
\begin{equation}
d \defeq \dist(u_m, Q \cup K)\geq \frac{\sqrt{2}}{2}|\bar xu_m|-o_\rho(1)\varepsilon_l \geq (\frac{17}{18\sqrt 2}+o_\rho(1))\varepsilon_l,
\end{equation}
where the last inequality follows from~\eqref{104}. 
Then the length of the connected component of $\Sigma \setminus (Q \cup K)$, containing $u_m$, is greater or equal to $d$.
In view of co-area inequality, item~\ref{W} of Proposition~\ref{items} and inequality ~\eqref{101} the inequality $\H(\Sigma \cap K \cap B_\varepsilon(\bar x)) \geq \frac{17}{18}\varepsilon$ holds.
On the other hand \[
\H(\Sigma \cap B_{\varepsilon_l}(\bar x)) \geq H(\Sigma \cap B_{\varepsilon_l}(\bar x)\cap K)+\H(\Sigma \cap B_{\varepsilon_l}(\bar x) \setminus (K \cup Q)) \geq \frac{17}{18}\varepsilon_l +d\geq
\varepsilon_l (\frac{17}{18}+\frac{17}{18\sqrt 2}-o_\rho(1))>\frac{19}{18}\varepsilon_l.
\]
So we get a contradiction to the inequality~\eqref{101}.
\end{itemize}
\item
The statement immediately follows from item~\ref{WWWW} of Proposition~\ref{items}.

\item
Because of items~\ref{BB} and~\ref{reb} Lemma~\ref{con} there exists $Q$ --- a cusp or a union of two cusps with vertex at $\bar x$ such that $G_\Sigma \cap B_{\varepsilon_0}(\bar x) \subset Q$ for sufficiently small $\varepsilon_0>0$. In view of Corollary~\ref{alm_parr} axes of these cusps are almost parallel to $(ix)$. If at least one energetic ray of $\bar x$ is two-orientated, the statement of the item is trivial because of Remark~\ref{star}. Similarly, the item is trivial if the rays are orientated to the different sides, id est for sufficiently small $\varepsilon_0>0$ points of $M$, corresponding to the energetic points from the set $B_{\varepsilon_0}(\bar x) \setminus \bar{x}$ are locates at both parts of the plane divided by energetic rays of $\bar x$. 
Let $(q_l\bar x]$ be a ray, not coinciding with any of cusps' axes. Then, because of the definition of cusps, there exists such $\varepsilon_1>0$, that $(q_1\bar x] \cap Q \cap B_{\varepsilon_1}(\bar x)=\{\bar x\}$. We can also assume $\sharp (\Sigma \cap \partial B_{\varepsilon_1}(\bar x))=2$, as  $\oord_{\bar x} \Sigma =\oordball_{\bar x}\Sigma=2$ because of the item~\ref{QQ} of the proven Lemma, Remark~\ref{rem_ord} and Lemma~\ref{ordball_}. We will show that there exists such $\varepsilon_2>0$, that 
\[
B_{\bar \varepsilon_2}(\bar x) \cap (q\bar x] \cap \Sigma = B_{\bar \varepsilon_2}(\bar x) \cap (q\bar x], 
\]
or 
\[
B_{\bar \varepsilon_2}(\bar x) \cap (q\bar x] \cap \Sigma = \bar x. 
\]

 Assume neither of the two cases come true. Then the set $\Sigma \cap B_{\varepsilon_1}(\bar x) \cap (q\bar x]$ contains infinitely many points close to $\bar x$. Herewith each point of the set $\Sigma \cap B_{\varepsilon_1}(\bar x) \cap (q\bar x]$ is not energetic and should be a center of a segment or a regular tripod.  
By item~\ref{WWW} Proposition~\ref{items} ray $(qx]$ can not be perpendicular to $(ix]$. Let $L_1$ be an open area bounded by perpendicular to $(ix]$, passing through the point $\bar x$, $(q_1\bar x]$ and $\partial B_{\varepsilon_1}(\bar x)$, not containing any energetic points. Then $S := \overline{L_1\cap \Sigma}$ is a Steiner forest, with all infinitely many terminal points, except at most three from the set $\partial B_{\varepsilon_1} (\bar x) \cup \{ \bar x \}$, located at the perpendicular or at $(q\bar x]$. By Lemma~\ref{St_l} the perpendicular contains infinite number of terminal points of $ S$, thus there exists an infinite sequence of points $\{u_l\}$ such that $\angle((u_l,\bar x)(ix))=\pi/2$ for each $l$ which contradicts to item~\ref{WWW} of Proposition~\ref{items}. Ergo there exists such $\varepsilon_1>0$, that $\Sigma \cap (q_1\bar x] \cap \overline {B_{\varepsilon_1}(\bar x)}$ coincides with $\{\bar x\}$ or $(q_1\bar x] \cap B_{\varepsilon_1}(\bar x) $. We pick such $\varepsilon_1$ that also $\sharp(\partial B_{\varepsilon_1}(\bar x)\cap \Sigma)=2$ and $Q \cap B_{\varepsilon_1}(\bar x) \cap (q_1\bar x] = \emptyset$.

Let $(m\bar x]$ be an energetic ray of $\bar x$. Pick such decreasing to zero sequences $\{\bar{\varepsilon}_i \}$ and $\{ \gamma_i\}$, that 
\begin{itemize}
\item $Q \cap B_{\bar \varepsilon_i}(\bar x)$ is contained in the angle with bisector $(m\bar x]$ and size $2\gamma_i$;
\item the intersection of the borders of this angle (with bisector $(m\bar x]$ and size $2\gamma_i$) with the set $\Sigma \cap \cap B_{\bar\varepsilon_{i+1}}(\bar x)$ coincides with the point $\{ \bar x \}$ or with a boundary of a sector of $B_{\bar \varepsilon_{i+1}}(\bar x)$. 
\end{itemize}
Let $\gamma_1 \leq \pi/12$, $\bar \varepsilon_1 \defeq \varepsilon_1$.
If at some number (at most at one by Lemma~\ref{angles2}) the intersection is the segment, let $j$ be equal to this number. Otherwise let $j=0$.

Define the following cusp: \[
Q_1 \defeq Q^{m,\bar x}_{\overline{\varepsilon}_{j+1}}\defeq \bigcup_{i=j+1}^{\infty} \left(Q(m,\overline x, \gamma_i, \overline{\varepsilon_i}) \setminus B_{\overline{\varepsilon}_{i+1}}(\bar x)\right).
\]
If $\bar x$ has two energetic rays, similarly define the second cusp $Q_2$ with second energetic ray as the axis.

Thus an arbitrary connected component of $\Sigma \cap B_{\bar{\overline{\varepsilon_1}}}(\bar x)\setminus \{\bar x\}$ located inside the cusp $Q_1$ and $Q_2$ if it exists, or located outside of it. As $\bar x$ has an energetic ray, at least on of the components should lay inside, then it has a tangent line at the point $\bar x$, which coincides to energetic ray and so $o_\rho(1)-$almost parallel to $(ix]$. Then by Lemma~\ref{angles2} another component can not lay in the same cusp. 
If there is only one cusp there could be one ray $(q \bar x]$, which doesn't intersect the cusp and $\varepsilon_1>0$ such that a connected component in $B_{\varepsilon_1}(\bar x)$ coincides with the segment $(qx]\cap B_{\varepsilon_1}(\bar x)$. 
Then we can choose an arbitrary cusp with $(qx]$ as an axis. Otherwise similarly construct the second cusp for the second energetic ray with decreasing sequence of radii $\{\varepsilon_k\}$.
Note that as $\bar x$ (due to item~1 of this lemma) can not have a unit order due to the item~3 from the proof of the Lemma~\ref{angles2}) the set $\Sigma \cap B_{\bar \varepsilon}(\bar x)$ can not be located in the one cusp for any $\bar \varepsilon>0$. So by Lemma~\ref{angles2} there is certainly two cusps. Applying an item~\ref{WWWW} Proposition~\ref{items} we get that there is an infinite strictly decreasing to $0$ sequence $\varepsilon_k$ such that for $S_k,T_k\defeq \partial B_{\varepsilon_k}(\bar x)$ the inequality $|(\angle S \bar x], (ix] - \angle (T \bar x], (ix]| = o_\rho(1)$ holds, so the axes of both cusps should be $o_\rho(1)$-almost parallel to $(ix]$, which is required for $\varepsilon$ equal to minimum of $\varepsilon_1$ and $\overline{\varepsilon_1}$.  
\end{enumerate}
 
 \end{proof}

\begin{corollary}
For every $x\in E_\Sigma$ and $c>0$ there exists such $\rho>0$ that for each point $\bar x \in B_\rho(x)\cap \Sigma \setminus \{ x\}$ there exists a one-sided tangent of $\Sigma$ at it. 
Moreover if $\bar x \in E_\Sigma$ then there are exactly two one-sided tangent at the $\bar x$ with angle, greater than $\pi-c$ between it.
Existence of one-sided tangents (one, two or three of them) and the size of angles between them in case $\bar x \in X_\Sigma \cup E_\Sigma$ immediately follows from Proposition~\ref{discr_str}. 
\label{1sdd}
\end{corollary}
\begin{remark}
By the compactness of $E_\Sigma$ one can swap the predicates in Lemma~\ref{alm}: $ \forall c>0 \ \exists \rho>0 \ \forall x \in E_\Sigma \ldots$
\end{remark}

\begin{corollary}

In the notations of item~\ref{__2} Lemma~\ref{alm} 
\begin{enumerate}
    \item the inequality $|\angle t \bar x y - \angle y \bar x s| <c$ holds, where $y$ is the limit point of the points corresponding to energetic points of the energetic ray $(ix]$ converging to $x$; 
    \item also the inequality  $|\angle s\bar xt| \geq 2\pi/3$ holds, and hence $\angle((s\bar{x}), (ix)) \leq \pi/6+c$ and $\angle((t\bar{x}), (ix)) \leq \pi/6+c$;
    \item 
let there exists a path in $\Sigma$ from $\bar x$ to $x$ containing $t$. Then $\angle((s\bar{x}], (ix]) \leq \pi/6+c$ and $\angle((t\bar{x}], (ix]) \geq 5\pi/6-c$. Moreover, if $\angle ixy >0$ (i.e. equal to $\pi/2$ up to $c$), then $\angle s\bar x y>0$ (and belongs to $[\pi/2-c; \frac{2}{3}\pi+c]$) and $\angle t\bar x y<0$.
\end{enumerate}
\label{sm_an}
\end{corollary}
\begin{proof}
The first item is trivial and the second one follows from the first by Lemma~\ref{angles2}. So we only have to prove the third one. Let $\Sigma'$ be a connected component of $\Sigma \setminus \{ x\}$. Then there exists such open area $U \subset B_\rho(x)$ that $\sharp (\partial U \cap \Sigma')=1$. We can also assume that $\bar x, s, t \in U$. 
Note that there exists such $\bar y=y(\bar x) \in M$ that 
\[
B_r(\bar y) \cap \Sigma =\emptyset = B_r(y) \cap \Sigma  
\]
and 
\[
B_r(\bar y) \cap B_r(y) \neq \emptyset.
\]
Thus, if the proving statement is not true and $\angle((s\bar{x}], (ix]) \geq 5\pi/6-c$ then the point $s$ is inside the area, bounded by $\partial B_r(y)$, $\partial B_r(\bar y)$ and by the path from $\bar x$ to $x$, located in $B_\rho(x)$. But if there are any energetic points in this area, the points of $M$, corresponding to it, are located in the same half-plane with respect to $(ix)$ as $y$. But the path, that began from $\bar x-t$ should contain an energetic point with corresponding point from the another half-plane. 
\end{proof}


Let us prove the following technical lemma:
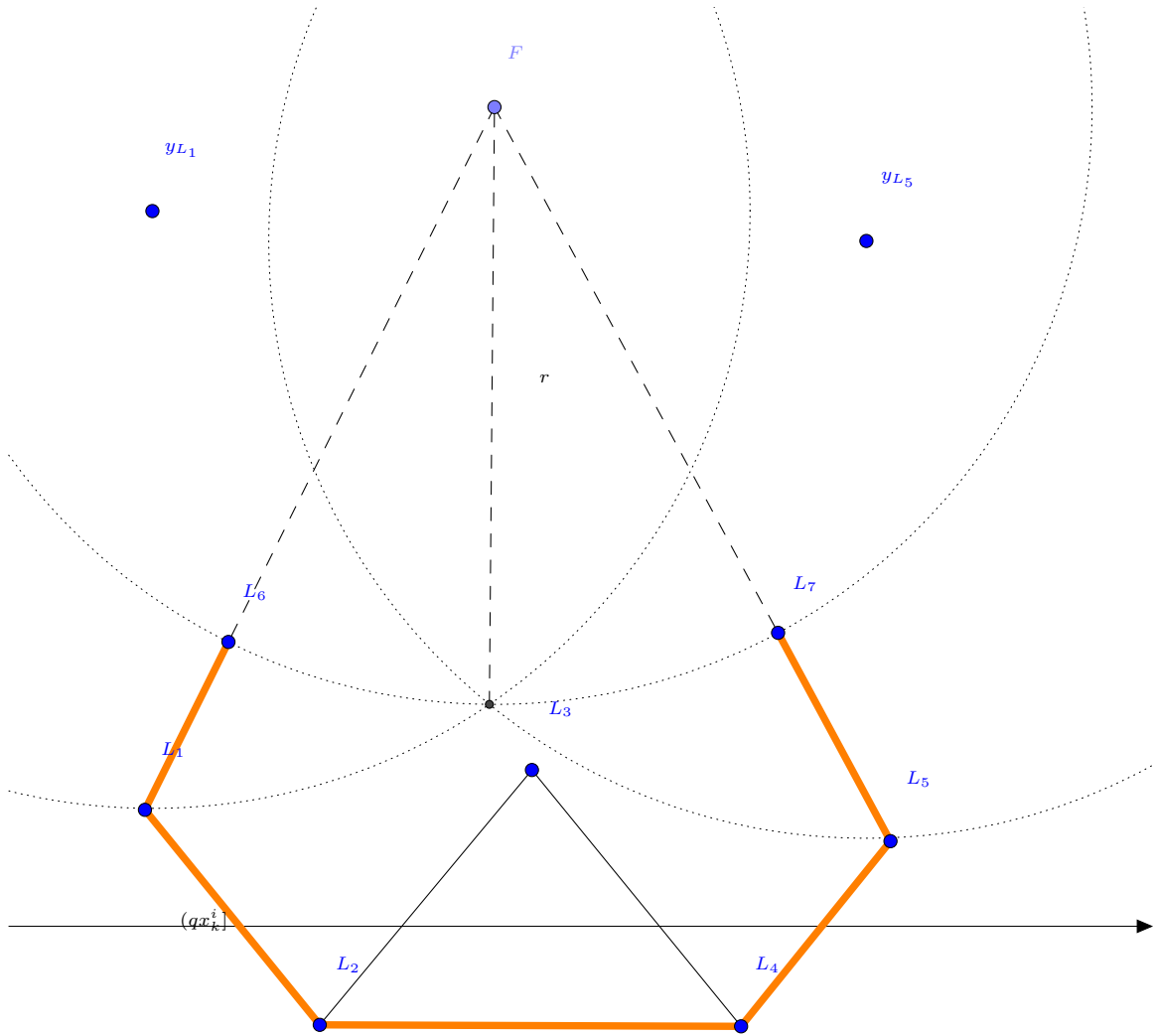
\begin{figure}[h]
\begin{center}
\definecolor{ffxfqq}{rgb}{1.,0.4980392156862745,0.}
\definecolor{xdxdff}{rgb}{0.49019607843137253,0.49019607843137253,1.}
\definecolor{uuuuuu}{rgb}{0.26666666666666666,0.26666666666666666,0.26666666666666666}
\definecolor{qqqqff}{rgb}{0.,0.,1.}
\begin{tikzpicture}[line cap=round,line join=round,>=triangle 45,x=1.0cm,y=1.0cm]
\draw[->,color=black] (-6.041676947400006,0.) -- (9.275239458600005,0.);
e
\clip(-6.041676947400006,-2.387331637299997) rectangle (9.275239458600005,12.305995296699988);
\draw (-4.22,1.56)-- (-1.88,-1.32);
\draw (-1.88,-1.32)-- (0.9604018000000019,2.0934775999999964);
\draw (0.9604018000000019,2.0934775999999964)-- (3.76,-1.34);
\draw (5.76,1.14)-- (3.76,-1.34);
\draw [dotted](-4.12,9.58) circle (8.cm);
\draw[dotted](5.437619600000005,9.17972159999999) circle (8.cm);
\draw [dash pattern=on 5pt off 5pt](0.3904769433497686,2.972761715852066)-- (0.4586741801717944,10.972471031626599);
\draw [dash pattern=on 5pt off 5pt] (0.4586741801717944,10.972471031626599)-- (-4.22,1.56);
\draw [dash pattern=on 5pt off 5pt] (0.4586741801717944,10.972471031626599)-- (5.76,1.14);
\draw [dotted](0.4586741801717944,10.972471031626599) circle (8.cm);
\draw [line width=2.8pt,color=ffxfqq] (-3.102243299513783,3.808682459727668)-- (-4.22,1.56)-- (-1.88,-1.32)-- (3.76,-1.34)-- (5.76,1.14)-- (4.255313224392387,3.930771523134365);
\begin{scriptsize}
\draw [fill=qqqqff] (-4.22,1.56) circle (2.5pt);
\draw[color=qqqqff] (-3.839626624400004,2.3675380866999984) node {$L_1$};
\draw [fill=qqqqff] (-1.88,-1.32) circle (2.5pt);
\draw[color=qqqqff] (-1.5011661044000029,-0.5165632212999987) node {$L_2$};
\draw[color=black] (-3.430396033400004,0.08753907970000074) node {$(qx^i_k]$};
\draw [fill=qqqqff] (0.9604018000000019,2.0934775999999964) circle (2.5pt);
\draw[color=qqqqff] (1.3439608615999994,2.913178874699998) node {$L_3$};
\draw [fill=qqqqff] (3.76,-1.34) circle (2.5pt);
\draw[color=qqqqff] (4.111139143600002,-0.5165632212999987) node {$L_4$};
\draw [fill=qqqqff] (5.76,1.14) circle (2.5pt);
\draw[color=qqqqff] (6.137804927600003,1.9777946666999988) node {$L_5$};
\draw [fill=qqqqff] (5.437619600000005,9.17972159999999) circle (2.5pt);
\draw[color=qqqqff] (5.864984533600003,10.006509118699991) node {$y_{L_5}$};
\draw [fill=qqqqff] (-4.12,9.58) circle (2.5pt);
\draw[color=qqqqff] (-3.7227035984000043,10.39625253869999) node {$y_{L_1}$};
\draw [fill=uuuuuu] (0.9271426566502357,15.786959884147928) circle (1.5pt);
\draw[color=uuuuuu] (-5.924753921400006,12.637277203699988) node {$D$};
\draw [fill=uuuuuu] (0.3904769433497686,2.972761715852066) circle (1.5pt);
\draw [fill=xdxdff] (0.4586741801717944,10.972471031626599) circle (2.5pt);
\draw[color=xdxdff] (0.739858560599999,11.701892995699989) node {$F$};
\draw[color=black] (1.1296019805999993,7.336766691699993) node {$r$};
\draw [fill=qqqqff] (-3.102243299513783,3.808682459727668) circle (2.5pt);
\draw[color=qqqqff] (-2.7483450484000036,4.4721525546999965) node {$L_6$};
\draw  [fill=qqqqff](4.255313224392387,3.930771523134365) circle (2.5pt);
\draw[color=qqqqff] (4.617805589600001,4.589075580699997) node {$L_7$};
\end{scriptsize}
\end{tikzpicture}
    \caption{The absence of five energetic points of degree two in a row (to statement~\ref{brrrr}).}
    \label{fig:dvapo}
    \end{center}
\end{figure}

\begin{proposition}
For an arbitrary $x \in E_\Sigma$ and $\bar c>0$ there exists such $\rho>0$, that if $\Sigma \cap B_\rho(x)$ contains a polyline $L_1-L_2-L_3-L_4-L_5$, the nodes of which are energetic points of the same energetic ray $(ix]$ and all its internal points are of order $2$, then $\angle((L_jL_{j+1}),(ix)) \leq \bar c$ for every $j=1, \dots, 4$ (see Fig.~\ref{fig:dvapo}).  
\label{brrrr}
\end{proposition}
\begin{proof}
Let $\rho>0$ be such small that conditions of Lemma~\ref{alm} for $c=\bar c/10$ holds. Assume the contrary to the desired statement: id est $\angle((L_jL_{j+1}),(ix) > \bar c$ for some $j\in\{1,\dots,4\}$. Due to  Lemma~\ref{alm} points $L_1$,\ldots, $L_5$ belong to $X_\Sigma$ and the inequality $\angle((L_jL_{j+1}),(ix))> \bar{c}/2$ holds for every $j=1,2,3,4$. By Remark~\ref{rem_cc}, for sufficiently small $\rho>0$ $\angle ((L_{j-1}L_{j}],(L_{j+1},L_j]) \leq \pi-\bar c/2$ for $j \in \{2,3,4\}$, so points of $M$ corresponding to $L_j$ with odd and even indices located at the different semi-plane with respect to $(ix)$. 
Note that the ratio between the lengths of the sides of the polyline is linear. Really, by law of sines and the inequality $\angle L_{j-1}L_jL_{j+1}\geq 2\pi/3$ we get
\[
\frac{|L_{j-1}L_j|}{|L_jL_{j+1}|}=\frac{\sin \angle L_{j-1}L_{j+1}L_j}{\sin \angle L_jL_{j-1}L_{j+1}}\geq \frac{\sin (c/2+o_\rho(1))}{\sin (\pi/3)}\geq \frac{\bar c}{3}.
\]
On the other hand
\[
\frac{|L_{j-1}L_j|}{|L_jL_{j+1}|}=\frac{\sin \angle L_{j-1}L_{j+1}L_j}{\sin \angle L_jL_{j-1}L_{j+1}}\leq \frac{\sin \frac{\pi}{3}}{\sin (\bar c/2+o_\rho(1))}\leq \frac{3}{\bar c}.
\]
Thus we get \[\frac{\bar c}{3} \leq \frac{|L_{j-1}L_j|}{|L_jL_{j+1}|}\leq \frac{3}{\bar c}\]
for every $j\in \{2,3,4\}$.
 Then the replacement the polyline $L_1-L_2-L_3-L_4-L_5$ by the polyline $L_1-L_2-L_4-L_5$ reduces the length by amount, linearly dependent on  $|L_1L_2|$:
 \begin{multline*}
 |L_2L_3|+|L_3L_4|-|L_2L_4|=\frac{(|L_2L_3|+|L_3L_4|)^2-|L_2L_4|^2}{|L_2L_3|+|L_3L_4|+|L_2L_4|}>\\
 > \frac{2|L_2L_3||L_3L_4| (1+ \cos \angle L_2L_3L_4)}{2(|L_2L_3|+|L_3L_4|)}\geq
 \frac{|L_3L_4|(1- \cos \frac{\bar{c}}{2})}{ \left (1+\frac{|L_3L_4|}{|L_2L_3|} \right)}\geq  \frac{\bar{c}^4}{24(1+\frac{3}{\bar{c}})}|L_1L_2|.
\end{multline*}
Let $A'$ be a figure bounded by $[L_1,L_5]$, $\partial B_r(y(L_1))$ and $\partial B_r(y(L_3))$. The boundary of $A'$ is the union $[L_1L_5] \cup \breve{L_1C} \cup \breve{L_5C}$ where $C$ is a point of intersection $\partial B_r(y(L_1))$ and $\partial B_r(y(L_3))$. Note that $\dist(C,(L_1L_5))=o(|L_1L_5|)=o(|L_1L_2|)=o(\rho)$. Thus there exists a set $S=S_1\cup S_5$ such that $\H(S) = o(|L_1L_2|)$, $L_j\in S_j$ and $S_j$ is connected for $j\in \{1;5\}$. And we also will impose some condition on the energy of $S$. Let $L_j'$ be such point that $L_1'$,$C$ and $L_5'$ lie on the same line parallel to $(L_1L_5)$ and $(L_jL_j')\perp (L_1L_5)$ for $j\in \{1;5\}$. And let $C'$ be such point that $|CC'|=r$,$(CC')\perp (L_1L_5)$ and $[CC')\cap (L_1L_5)=\emptyset$. Now let $L_j''\defeq \partial B_r(C')\cap [L_i'C']$ for $i\in \{1;5\}$. Then $S\defeq [L_1L_1']\cup [L_1'L_1'']\cup [L_5L_5']\cup [L_5'L_5'']$. Thus $\H(S)=o(|L_1L_2|)=o(\rho)$. 
It is easy to see that the connected set  
\[
S \cup [L_1L_2] \cup [L_2L_4]\cup [L_4L_5]
\]
for sufficiently small $\rho>0$ has strictly smaller length and not greater energy than $\Sigma$, so we get a contradiction with optimality of $\Sigma$.
  \end{proof}

\begin{lemma}
Let $x \in E_\Sigma$ and let there exist such a  cusp $Q\in \mathbb{Q}^{a,x}_\varepsilon$ that $Q^{a,x}_\varepsilon \cap \Sigma$ is an arc. Then for each $c>0$ there exists such $\varepsilon=\varepsilon(c)>0$ that every one-sided tangent at the points of the set $Q^{a,x}_\varepsilon \cap B_{\varepsilon(c)}(x) \cap \Sigma$ has an angle not greater than $2c$ with a line $(ax)$.
\label{zapred}
\end{lemma}

\begin{proof}
Assume the contrary, id est there exists a sequence of points with one-sided tangents having an angle with $(ax)$ greater than $2c$, converging to $x$. Without loss of generality assume $c<\pi/2$. By item~\ref{__3} of Lemma~\ref{alm} for sufficiently small $\varepsilon$ this sequence consists only on the points of $X_\Sigma \sqcup S_\Sigma$. As there is no branching points, the direction of tangents doesn't change at points of $S_\Sigma$, that is there is a sequence of points from $ X_\Sigma $ converging to $ x $ such that at least one of the two segments incident to the point has an angle with $ (ax) $ greater than $ 2c $. Let $L_1$ be such a point and let $\varepsilon$ be chosen not greater than $\rho$ for $c/5$ in Lemma~\ref{alm}. By item~\ref{__2} of Lemma~\ref{alm} both segments incident to $L_1$ has an angle with $(ax)$ greater than $9/5c$. Then the segment lying on the path from $L_1$ to $x$ should end with point $L_2\in X_\Sigma $. But the segment leaving $ L_2 $ must have an angle with $ (ax) $ greater than $ 8/5c $ and end with point $ L_3 \in X_\Sigma $. Thus for any arbitrarily small $ \bar \varepsilon$ there exists a broken line $ L_1-L_2-L_3-L_4-L_5 \subset \Sigma \cap B_{\bar \varepsilon} (x) $ such that all angles $ \angle ((L_jL_{j + 1}), (ax))> c $ for $j = 1,2,3,4 $. This is a contradiction with Proposition~\ref{brrrr}.
\end{proof}

\begin{lemma}
Let $x\in E_\Sigma$. 
\begin{enumerate}
\item Let $\Sigma'$ be a closure of a connected component of $\Sigma \setminus \{x\}$, let $(ax]$ be one-orientated energetic ray, let $\Sigma'$ contain an infinite sequence of energetic points of $(ax]$ converging to $x$, and doesn't contain (in the neighbourhood of $x$) any energetic points of other energetic rays.
Then there exists such 
$\rho_1>0$ that $\Sigma'\cap B_{\rho_1}(x)$ is an arc and contained in a cusp $Q \in \mathbb{Q}_{\rho_1}^{a,x}$.
\label{0}
\item If there exists such a cusp $Q\in \mathbb{Q}_\rho^{a,x}$ that $\Sigma \cap \partial Q \subset \{x\} \cup \partial B_\rho(x)$, then 
there exists such $\rho_2>0$, that $Q \cap B_{\rho_2}(x) \cap \Sigma$ is an arc or coincides with the point $\{x\}$.
\label{A}
\item  Let $\Sigma'$ be a closure of a connected component of $\Sigma \setminus \{x\}$, let $(ax]$ be an energetic ray, let $\Sigma'$ contain an infinite sequence of energetic points of $(ax]$ converging to $x$, and doesn't contain (in the neighbourhood of $x$) any energetic points of other energetic rays. Then there exists such 
$\rho_1>0$ that $\Sigma'\cap B_{\rho_1}(x)$ is an arc and is contained in a cusp $Q \in \mathbb{Q}_{\rho_1}^{a,x}$.
\label{AB}
\end{enumerate}
\label{uggi}
\end{lemma}
\begin{proof}
Let us prove item~\ref{0}. \begin{proof}
Without loss of generality we assume that $(ax] $ is the negative half of the abscissa axis, $ \rho $ is such that all points corresponding to the energetic points of $ \Sigma' \cap B_\rho(x) $ are located in the upper half-plane (we work in a neighbourhood of $x$, so it is possible due to item~\ref{1115} of Lemma~\ref{Losiu}) and $\sharp (\partial B_\rho(x) \cap \Sigma')=\oordball_x\Sigma'$ and $z\in \partial B_\rho(x) \cap \Sigma'$ is such that the path between $z$ and $x$ is contained in $\overline B_\rho(x)$.

We will monitor changes in the slope of the one-sided tangent to the arc $\breve{[zx[}$, where as a connected component we will choose a one containing $z$. By Corollary~\ref{1sdd} for sufficiently small $\varepsilon>0$ such one-sided tangent exists at each point of $\breve{[zx[}$ (and, due to the finite length of $\Sigma$, almost everywhere is contained in the tangent line at the same point).

Thus we can define 
$ \turn (\breve {[zx]}) $ as the upper limit (supremum) over all sequences of points of the curve:
  \[
 \turn(\breve{[zx]})=\sup_{n\in \N, z\preceq t_1 \prec \dots \prec t_n \prec x} \sum_{i=2}^{n} \widehat{t^i,t^{i-1}},
 \]
 where $ t ^ i $ denotes the ray of the one-sided tangent to the curve $ \breve {[zt_i]} \subset \breve {[zx[} $ at the point $t_i$, and $ {t_1, \dots, t_n}$ is the partition of the curve $ \breve{[zx[} $ in the order corresponding to the parameterization, for which $ z $ is the beginning of the curve and $ x $ is the end. In this case, the angle $ \widehat (t^i, t^{i+1}) \in [-\pi, \pi [$ between two rays is counted from ray $ t^i $ to ray $ t^{i+1} $; positive direction is counterclockwise.
 
Note that in view of $B_r(y(x))\cap \breve{[zx]} = \emptyset$, it is true that
 \[
|\turn ([zx])|  < 2 \pi.
 \]
Let us show that there are finitely many branching points on the path from $ z $ to $ x $. To do this, consider an arbitrary branching point $ D_2 $ on the arc $ \breve {[zx]} $.
Three segments $ [D_1D_2] $, $ [D_2D_3] $ and $ [D_2C] $ with pairwise angles $ 2 \pi / 3 $ emerge from the point $ D_2 $, where, without loss of generality, the broken line $ D_1-D_2-D_3 $ is a subset of the path from $ z $ to $ x $. Note that the contribution to the turn $ \breve {[zx]} $ of the point $ D_2 $ is $ - \pi / 3 $, since the counterclockwise angle $ \widehat{ D_1D_2D_3} $ is $ 2 \pi / 3 $. This is so, since the path that started from the segment $ [D_2C] $ must end with an energy point, and therefore must go to the upper half of the ball $ B_\rho (x) $.
 The same is true for an arbitrary branching point inside the path from $ z $ to $ x $.
 
For sufficiently small $\rho$ from Lemma~\ref{alm} and Corollary~\ref{sm_an} 
we get that the angle of the tangent direction at the arbitrary energetic point $\bar x$ on the path from $ z $ to $ x $ changes clockwise (the change at $\bar x$ does not exceed $ \pi / 6 + o_\rho(1)$), that is, the contribution to $ \turn $ is non-positive.

So there is a positive $ \rho_4 <\rho $ such that $ \breve {[zx]} \cap \overline B_{\rho_2} (x) $ does not contain branching points and thus is an arc.
Moreover, due to Lemma~\ref{dug_ok} there exists such positive $\rho_1<\rho_4$ and such a cusp $Q\in \mathbb{Q}^{a,x}_{\rho_1}$ that $\breve{[zx]}\cap B_{\rho_1}(x) \subset Q$ and $\sharp(\partial B_{\rho_1}(x) \cap \breve{[zx]})=\oordball_z\breve{[zx]}=1$.
\end{proof}
\begin{remark}
In this way, we have proved that a point $x \in E_\Sigma$ can be the limit of branching points only if it has a two-orientated energetic ray.
\label{remA}
\end{remark}

Let us prove item~\ref{A}.
\begin{proof}
Let $ \rho_2>0 $ satisfy the assertion of the Lemma~\ref{alm} for $ c = \frac{\pi}{100} $ and the assertion of the Lemma~\ref{zapred} for $ c = \frac{1}{20} $.In addition, we choose $ \rho_2> 0 $ such that
\[
\sharp(\Sigma \cap Q \cap \partial B_{\rho_2}(x))=\oordball_x(\Sigma\cap Q)\leq \oordball_x(\Sigma)\leq 3. 
\]
Clearly, $\oordball_x(\Sigma \cap Q)=1$: otherwise by co-area inequality one could decrease length of $\Sigma$ by replacing $\Sigma \cap Q$ with $T^Q$, which is defined in the proof of Lemma~\ref{angles2} and has a length $\rho_2 +o(\rho_2)$. Then energy will not increase and connectivity will remain, so we have a contradiction with an optimality of $\Sigma$. 

 Thus we assume $\sharp (\partial B_{\rho_2}(x) \cap \Sigma) =1$.
Let $Q_1 \defeq Q \cap \overline{B_{\rho_2}(x)}$ and $z \defeq \partial Q_1 \cap \Sigma$. 
We have to check only the case when the energetic ray $(ax]$ is two-orientated (if $(ax]$ is not an energetic ray at all everything is clear).

Assume $Q \cap \Sigma$ contains infinite number of branching points, then it also contains infinitely many terminal points. 

As $\sharp (\partial B_{\rho_2}(x) \cap \Sigma)=1$, each branching point in the path from $z$ to $x$ is connected with a terminal point by a segment. Suppose the contrary. In this case, on the path from $ z $ to $ x $, there is a branching point $ A $ connected to a branching point $ B $ that does not lie on the path from $ z $ to $ x $. Moreover, at least one of the two segments $ \Sigma \setminus [AB] $ incident to $ B $ is not perpendicular or parallel to $ (ax] $ up to $ \pi / 100 $, so it inevitably ends with a branching point, from which at least one segment also originates that is not perpendicular or parallel to $ (ax] $ up to $ \pi / 100 $, and so on. Thus there is a path that starts with this segment and never ends, and the segments of the same parity are parallel. It is easy to see that the limit point $ w $ of this path must belong to $E_\Sigma$ and lie in the ball $ B_\rho (x) $, due to Remark~\ref{remA}, its energetic ray should be two-orientated. But then (for sufficiently small $\rho_2>0$) $\partial B_{|xw|}(x) \cap \Sigma \cap Q_1=\{w \}$ which contradicts to the existence of the path from $z$ to $x$ lying in $Q_1 \cap \overline{B_{\rho_2}(x)}$ and is not containing $w$. So we have a contradiction to the assumption of the existence on the path such a branching point, which is not connected to a terminal point.

Thus, by item~\ref{__1} of Lemma~\ref{alm} in every branching point at the path from $z$ to $x$ the angle between a one-sided tangent to a path and a line $(ax)$ is modulo belongs to the interval $[\pi/6-\pi/100, \pi/6+\pi/100]$. Let $D_1$ be a branching point at the path of $\Sigma$ from $z$ to $x$, and let $[D_1;D_2]$ be a maximal segment of the path of $\Sigma$ from $D_1$ to $x$. As $|\angle (D_1D_2), (ax) -\pi/6|<\pi/100$, the point $D_2$ can be an isolated energetic point with order $2$ or a branching point only. If $D_2$ is a branching point then, similarly, the end of the maximal segment $[D_2;D_3]$ on the path, can be an isolated energetic point with order $2$ or a branching point only, and the angle $\angle((D_2D_3),(ax))=\pi/3-\angle((D_1D_2),(ax))$, and so belongs to the interval $[\pi/6-\pi/100, \pi/6+\pi/100]$. If $D_2$ is an isolated energetic point with order $2$ then, by item~\ref{__2} of Lemma~\ref{alm}
\[
\angle((D_2D_3),(ax)) \in [\angle((D_1D_2),(ax))-\pi/100, \angle((D_1D_2),(ax))+\pi/100] \subset [\pi/6-\pi/50,\pi/6+\pi/50],
\]
where $[D_2;D_3]$ is the maximal segment on the path from $D_2$ to $x$.
Thus, the path from $ D_1 $ to $ x $ starts with a polyline. In this case, the angle between the one-sided tangent and the straight line $ (ax) $ at the nodes of the polyline either belongs to the interval $ [\pi / 6- \pi / 100, \pi / 6 + \pi / 100] $ (if the node was a branching point), or changes by an amount not exceeding $ \pi / 100 $. Thus, in view of the Proposition~\ref {brrrr}, among the first four nodes of the polyline after $D_1$, there must be a branching point $ C $. Moreover, since all one-sided tangents to the path points between $ D_1 $ and $ C $ have an angle with $ (ax) $ greater than $ \pi / 6- \pi / 20> \pi / 10 $, there will be no points from the set $E_\Sigma$.
Repeating the reasoning for the point $ C $ as $ D_1 $, and then iterating, we get the path $ \sigma $, all one-sided tangents along which have an angle with $ (ax) $ greater than $ 1/10 $. Let $ \bar x $ be the limit point of such a path ($ \bar x $ belongs to the path from $ z $ to $ x $ and may coincide with $ x $). It is clear that $ \oordball_{\bar x}\sigma= 1 $. Let $ \rho_0> 0 $ be sufficiently small and such that
\[
\sharp (\sigma \cap \partial B_{\rho_0}(\bar x))=1.
\]
Define \[z_0=z_0(\rho_0) \defeq \partial B_{\rho_0}(\bar x) \cap \sigma.\]
Let $(a_0\bar x]$ be an energetic ray  of $\bar x$ closest to an infinite sequence of energetic points of $\sigma \cap B_{\rho_0}(\bar x)$.
Then, as $(a_0\bar x]$ due to item~\ref{0} and Remark~\ref{star} is two-orientated energetic ray, then
\[
\angle z_0\bar xa_0=o_{\rho_0}(1)
\]
and 
\begin{equation}
   \H(\sigma \cap B_{\rho_0}(\bar x))=\rho_0+o_{\rho_0}(1).
    \label{smll}
\end{equation}
On the other hand $\angle ((ax], (a_0\bar x])=o_{\rho_2}(1)$  and thus, for sufficiently small $\rho_0$ and $\rho_2$, all one-sided tangents to $\sigma \cap B_{\rho_0}(\bar x)$ have angles with $(a_0\bar x]$, greater than $\pi/10$. But it means that 
\[
\H(\sigma \cap B_{\rho_0}(\bar x)) \geq \frac{Pr_{(a_0\bar x]}(\sigma \cap B_{\rho_0}(\bar x))}{\cos \frac{\pi}{10}}\geq \frac{Pr_{(a_0\bar x]}([z_0\bar x])}{\cos \frac{\pi}{10}} \geq \frac{\cos \angle  z_0\bar x a_0}{\cos \frac{\pi}{10}}|z_0 \bar x|=\frac{\cos (o_{\rho_0}(1))}{\cos \frac{\pi}{10}}\rho_0.
\]
So we get a contradiction with~\eqref{smll}. 
\end{proof}
And now we have to prove item~\ref{AB}. 
\begin{proof}
If $(ax]$ is one-orientated energetic ray, then item~\ref{0} implies the statement. If $(ax]$ is two-orientated then, by Remark~\ref{star}, $\Sigma' \cap B_\rho(x)$ is contained in a cusp for sufficiently small $\rho>0$ and then item~\ref{A} of Lemma~\ref{uggi} implies the result.  

\end{proof}

\end{proof}
\begin{corollary}
Item~\ref{__3} of Lemma~\ref{alm} can be formulated as follows: if $\bar {x} \in E_\Sigma \setminus x $, then $ \exists \varepsilon : B_\varepsilon (\bar x) \cap \Sigma $ is an arc contained in the union of two cusps with axes almost parallel to $(ix)$ where these axes are one-sided tangents to $\Sigma$ at the point $\bar x$.
\label{notv}
\end{corollary}
\begin{remark}
As in Corollary~\ref{sm_an} in the notations of Corollary~\ref{notv} a path in $\Sigma$ from $\bar x$ to $x$ has a one-sided tangents almost parallel (not almost complementary) to $(ix]$.
\label{retv}
\end{remark}

And now we are finally ready to prove the following lemma, in which it is stated that no connected component of the $ \ Sigma \setminus \{ x \}$ can contain infinite sequences of energy points of two different energetic rays converging to $ x $.
\begin{lemma}
Let $\Sigma$ be the a minimizer, $(ax]$ and $(bx]$ be two different energetic rays with vertex at $x$ and cusps $Q^{a,x}_\rho \in \mathbb{Q}^{a,x}_\rho$ and $Q^{b,x}_\rho \in \mathbb{Q} ^{b,x}_\rho$ be such that $G_\Sigma \cap \overline{B_\rho(x)} \subset  Q^{a,x}_\rho \cup Q^{b,x}_\rho$ for some $\rho>0$. Let $\Sigma'$ be the connected component of $\Sigma \setminus \{x\}$ containing an infinite sequence of points from the set $G_\Sigma \cap  Q^{a,x}_\rho$, converging to $x$. Then there exists such $\rho_3>0$, that 
\[
\Sigma' \cap G_\Sigma \cap  Q^{b,x}_\rho \cap \overline {B_{\rho_3}(x)} = \{x\}.
\]
\label{eva}
\end{lemma}
The sketch of the proof is as follows: suppose the opposite, let $ \Sigma '$ contain an infinite sequence of points from the set $G_\Sigma \cap  Q^{b,x}_\rho$, converging to $x$. Then
\begin{enumerate}
\item[Step 1.] There exists such $\rho_1>0$, that $\Sigma' \cap \overline {B_{\rho_1}(x)} \setminus \{ x\}$ doesn't contain the points from $X_\Sigma$ with order $2$ except the ones with incident segments $o_\rho(1)$-almost parallel to the closest energetic ray $(ix]$.
\item[Step 2.] There exists such $\rho_2>0$, that $\Sigma' \cap \overline {B_{\rho_2}(x)}\setminus \{ x\}$ doesn't contain any energetic points with order $2$
\item[Step 3.] The angle $\angle ((ax],(bx])$ can be only $\pi/3$, $2 \pi/3$ or $\pi$.
\item[Step 4.] Obtaining a contradiction in each of the three possible cases.
\end{enumerate}
\begin{proof}
In view of the assumption that the assertion of the lemma is incorrect, we can assume that each of the energetic rays is one-orientated. Let's denote the value of the internal angle between the rays by $ \alpha$ (id est the angle doesn't contain any points corresponding to $x$): $ \alpha \defeq \angle ((ax],(bx]) \leq \pi $. It is possible, without loss of generality, to consider $ \rho_1 \leq \rho $ such that the statements of Lemma~\ref{alm} for $ c \defeq \alpha / 100 $ hold and that for every point $z \in Q^{i,x}_{\rho} \cap \overline{B_{\rho_1}(x)}$ the inequality $|\angle zxi |\leq \alpha/100$ holds, where $i=a,b$. We also get such a small $\rho_1$ that all points corresponding to the same energetic ray (excluding $x$) are located at the same semi-plane with respect to the line containing this energetic ray (It is possible due to item~\ref{1115} of Lemma~\ref{Losiu}). 
In the proof we still use the formulation ``the values \ldots are equal to within $ \bar c $ '', which mean that the absolute value of the difference between the values does not exceed $ \bar c $. 
Let area $U \ni x$ be such that $U \subset B_{\rho_1}(x)$ and $\sharp (\partial U \cap \Sigma)=\oord_x\Sigma$. Then $\sharp (\partial U \cap \Sigma')=1$, and hence for each point $\bar x \in \Sigma' \cap U \setminus{x}$ there exists only one path to $\partial U$, and the path from $\bar x$ to $x$ is contained in $U \subset B_\rho(x)$. 

\begin{itemize}

\item{Step 1.} For any point $ \bar {x} \in G_\Sigma \cap \Sigma'\cap U \setminus \{x \} $ with $\oord_x\Sigma =2$, the tangent rays to $ \bar x $ are $\alpha/10$-almost parallel to the nearest energetic ray $(ix]$.


Assume the contrary: let there exists such an energetic point 
$\bar x \in B_{\rho_1}(x) \setminus \{x\}$ with order $2$ that it has a tangent ray with angle with the closest to $\bar x$ energetic ray greater then $\alpha/10$. Thus by item~\ref{AB} Lemma~\ref{alm} the point $\bar x$ belongs to $X_\Sigma$ and hence there exists nontrivial segments $[s\bar x], [t\bar x] \subset \Sigma$. Without loss of generality let $[\bar x t]$ be the maximal segment in the path from $\bar x$ to $x$. Thus, by Corollary~\ref{sm_an}, as $\bar x \in Q^{i,x}_{\rho_1}$, $\angle ((t\bar x],(i x]) \leq \pi/6 +\alpha/100$. So
\begin{equation}
\alpha/10 <\angle ((t\bar x],( x \bar x])\leq \pi/6 +\alpha/50.
\label{hu}
\end{equation}
Without loss of generality let $i \defeq a$. The point $t$ can not be an energetic point of the energetic ray $(ax]$ due to Lemma~\ref{alm} and Corollaries~\ref{sm_an} and~\ref{notv}. It also can not coincide with $x$ due to definition of $Q^{a,x}_{\rho_1}\ni x$. Thus point $t$ can be a branching point or an energetic point (of order $2$) of the energetic ray $(bx]$.

 
 \begin{figure}
\begin{center}

\definecolor{uuuuuu}{rgb}{0.26666666666666666,0.26666666666666666,0.26666666666666666}
\definecolor{xdxdff}{rgb}{0.49019607843137253,0.49019607843137253,1.}
\definecolor{qqzzqq}{rgb}{0.,0.6,0.}
\definecolor{qqqqff}{rgb}{0.,0.,1.}
\begin{tikzpicture}[line cap=round,line join=round,>=triangle 45,x=1.0cm,y=1.0cm]
\clip(-5.76,-4.4) rectangle (5.6,4.36);
\draw [shift={(3.08,0.32)},color=qqzzqq,fill=qqzzqq,fill opacity=0.1] (0,0) -- (160.9065079995144:0.6) arc (160.9065079995144:203.87737522756086:0.6) -- cycle;
\draw [shift={(-2.361086323957324,2.2034529582929205)},color=qqzzqq,fill=qqzzqq,fill opacity=0.1] (0,0) -- (-47.57914750028342:0.6) arc (-47.57914750028342:-19.09349200048562:0.6) -- cycle;
\draw (3.08,0.32)-- (-4.72,3.02);
\draw (3.08,0.32)-- (-4.42,-3.);
\draw [->] (-4.62,2.54)-- (-2.76,2.64);
\draw [->] (-2.76,2.64)-- (-1.7,1.48);
\draw (-1.7,1.48)-- (0.42,1.64);
\draw (-1.7,1.48)-- (-2.94,-2.3);
\draw (-2.94,-2.3)-- (-2.52,-3.28);
\draw [->] (-2.94,-2.3)-- (-3.6,-2.3);
\draw [->] (-1.7,1.48) -- (-1.984168738471409,0.6137436843371562);
\draw [->] (-2.7047092790119494,-1.5827428021493297) -- (-2.94,-2.3);
\draw [dash pattern=on 3pt off 3pt] [->] (-3.6,-2.3)-- (-4.08,-1.56);
\draw [dash pattern=on 3pt off 3pt] (-3.6,-2.3)-- (-3.76,-2.58);
\begin{scriptsize}
\draw [fill=qqqqff] (3.08,0.32) circle (2.5pt);
\draw[color=qqqqff] (3.22,0.68) node {$x$};

\draw [fill=qqqqff] (-4.62,2.54) circle (2.5pt);
\draw[color=qqqqff] (-4.57,2.74) node {$s$};
\draw[color=black]  (1.66,1.04) node {$a$};
\draw[color=black] (-0.74,-0.98) node {$b$};
\draw[color=qqzzqq] (2.3, 0.3) node {$\alpha$};
\draw [fill=qqqqff] (-2.76,2.64) circle (2.5pt);
\draw[color=qqqqff] (-2.56,3.) node {$x^a_k$};
\draw [fill=qqqqff] (-1.7,1.48) circle (2.5pt);
\draw[color=qqqqff] (-1.56,1.80) node {$t$};
\draw [fill=qqqqff] (0.42,1.64) circle (2.5pt);
\draw[color=qqqqff] (0.74,2.) node {$x^a_{k+1}$};
\draw [fill=qqqqff] (-2.94,-2.3) circle (2.5pt);
\draw[color=qqqqff] (-2.65,-1.94) node {$D$};
\draw [fill=qqqqff] (-2.52,-3.28) circle (2.5pt);
\draw [fill=qqqqff] (-3.6,-2.3) circle (2.5pt);
\draw[color=qqzzqq] (-1.42,2.34) node {$\delta = \pi/6$};
\end{scriptsize}
\end{tikzpicture}

\caption{Figure for Step 1 of the proof of the lemma~\ref{eva}}
\label{norm1}
\end{center}
\end{figure}
\begin{figure}
\begin{center}

\definecolor{qqwuqq}{rgb}{0.,0.39215686274509803,0.}
\definecolor{uuuuuu}{rgb}{0.26666666666666666,0.26666666666666666,0.26666666666666666}
\definecolor{qqqqff}{rgb}{0.,0.,1.}
\begin{tikzpicture}[line cap=round,line join=round,>=triangle 45,x=1.0cm,y=1.0cm]
\clip(-4.3,-2.46) rectangle (7.06,6.3);
\draw [shift={(-1.5547267260185111,3.9511797653440244)},color=qqwuqq,fill=qqwuqq,fill opacity=0.1] (0,0) -- (164.56247582346808:0.6) arc (164.56247582346808:196.74197038529311:0.6) -- cycle;
\draw [shift={(-0.7851861245803811,3.7386706452816116)},color=qqwuqq,fill=qqwuqq,fill opacity=0.1] (0,0) -- (-52.74471054642292:0.6) arc (-52.74471054642292:-15.43752417653194:0.6) -- cycle;
\draw [shift={(1.8152832336344478,0.31953500762878106)},color=qqwuqq,fill=qqwuqq,fill opacity=0.1] (0,0) -- (127.25528945357708:0.6) arc (127.25528945357708:197.51387416437143:0.6) -- cycle;
\draw [shift={(2.2003747945205476,0.44105643835616437)},color=qqwuqq,fill=qqwuqq,fill opacity=0.1] (0,0) -- (17.513874164371412:0.6) arc (17.513874164371412:88.05851360908564:0.6) -- cycle;
\draw [shift={(6.38,1.76)},color=qqwuqq,fill=qqwuqq,fill opacity=0.1] (0,0) -- (164.56247582346805:0.6) arc (164.56247582346805:197.5138741643714:0.6) -- cycle;
\draw (-3.18,4.4)-- (6.38,1.76);
\draw (6.38,1.76)-- (-3.,-1.2);
\draw (-3.52,3.36)-- (-1.06,4.1);
\draw (2.18,-0.16)-- (-1.06,4.1);
\draw (2.18,-0.16)-- (2.24,1.46);
\draw [dash pattern=on 3pt off 3pt] (2.24,1.46)-- (2.261400286944046,2.2413084648493546);
\begin{scriptsize}
\draw [fill=qqqqff] (6.38,1.76) circle (2.5pt);
\draw[color=qqqqff] (6.52,2.12) node {$x$};
\draw[color=black] (1.56,2.94) node {$a$};
\draw[color=black]  (-2.64,-0.76) node {$b$};
\draw [fill=qqqqff] (-3.52,3.36) circle (2.5pt);
\draw[color=qqqqff] (-3.38,3.72) node {$s$};
\draw [fill=qqqqff] (-1.06,4.1) circle (2.5pt);
\draw[color=qqqqff] (-0.92,4.46) node {$x^a_k$};
\draw [fill=qqqqff] (2.18,-0.16) circle (2.5pt);
\draw[color=qqqqff] (2.47, -0.2) node {$x_b^l$};
\draw[color=qqwuqq] (-2.48,3.87) node {$\delta$};
\draw[color=qqwuqq] (0.38,3.12) node {$\delta + o_\rho(1)$};
\draw[color=qqwuqq] (0.2,0.5)  node {$\alpha + \delta + o_\rho(1)$};
\draw[color=qqwuqq] (3.24,1.2) node {$\alpha + \delta + o_\rho(1)$};
\draw[color=qqwuqq] (5.55,1.82) node {$\alpha$};
\end{scriptsize}
\end{tikzpicture}

\caption{Figure for Step 1 of the proof of the lemma~\ref{eva}}
\label{norm2}
\end{center}
\end{figure}
\begin{figure}[h]
\begin{center}

\definecolor{uuuuuu}{rgb}{0.26666666666666666,0.26666666666666666,0.26666666666666666}
\definecolor{xdxdff}{rgb}{0.49019607843137253,0.49019607843137253,1.}
\definecolor{qqzzqq}{rgb}{0.,0.6,0.}
\definecolor{qqqqff}{rgb}{0.,0.,1.}
\begin{tikzpicture}[line cap=round,line join=round,>=triangle 45,x=1.0cm,y=1.0cm]
\clip(-5.76,-4.5) rectangle (5.6,4.36);
\draw [shift={(3.08,0.32)},color=qqzzqq,fill=qqzzqq,fill opacity=0.1] (0,0) -- (160.9065079995144:0.6) arc (160.9065079995144:203.87737522756086:0.6) -- cycle;
\draw [shift={(-2.3887081339712917,2.213014354066986)},color=qqzzqq,fill=qqzzqq,fill opacity=0.1] (0,0) -- (-48.9909130984298:0.6) arc (-48.9909130984298:-19.093492000485618:0.6) -- cycle;
\draw (3.08,0.32)-- (-4.72,3.02);
\draw (3.08,0.32)-- (-4.42,-3.);
\draw [->] (-4.3,2.48)-- (-2.76,2.64);
\draw [->] (-2.76,2.64)-- (-1.56,1.26);
\draw (-1.56,1.26)-- (-0.78,1.5);
\draw (-1.56,1.26)-- (-2.84,-2.68);
\draw (-2.84,-2.68)-- (-2.52,-3.28);
\draw [->] (-2.84,-2.68)-- (-4.04,-2.4);
\draw [->] (-1.56,1.26) -- (-1.8533354719704869,0.3570767503408451);
\draw [->] (-2.5971192557542704,-1.9323827091186139) -- (-2.84,-2.68);
\draw (-0.78,1.5)-- (-0.54,2.08);
\draw [dash pattern=on 3pt off 3pt] (-0.78,1.5)-- (-0.36,1.08);
\draw [dash pattern=on 3pt off 3pt] [->] (-4.04,-2.4)-- (-4.4,-1.52);
\draw [dash pattern=on 3pt off 3pt] (-4.04,-2.4)-- (-4.36,-2.58);
\begin{scriptsize}
\draw [fill=qqqqff] (3.08,0.32) circle (2.5pt);
\draw[color=qqqqff] (3.22,0.68) node {$x$};
\draw[color=black] (0.66,1.04) node {$a$};
\draw[color=black] (-0.74,-1.12) node {$b$};
\draw[color=qqzzqq] (2.25, 0.3) node {$\alpha$};
\draw [fill=qqqqff] (-4.3,2.48) circle (2.5pt);
\draw[color=qqqqff] (-4.12,2.7) node {$s$};
\draw [fill=qqqqff] (-2.76,2.64) circle (2.5pt);
\draw[color=qqqqff] (-2.56,3.) node {$x^a_k$};
\draw [fill=qqqqff] (-1.56,1.26) circle (2.5pt);
\draw[color=qqqqff] (-1.42,1.62) node {$t$};
\draw [fill=qqqqff] (-0.78,1.5) circle (2.5pt);
\draw[color=qqqqff] (-0.86,1.26) node {$C$};
\draw [fill=qqqqff] (-2.84,-2.68) circle (2.5pt);
\draw[color=qqqqff] (-2.5,-2.32) node {$D$};
\draw [fill=qqqqff] (-2.52,-3.28) circle (2.5pt);
\draw [fill=qqqqff] (-4.04,-2.4) circle (2.5pt);
\draw[color=qqzzqq] (-1.44,2.3) node {$\delta = \pi/6$};
\draw [fill=qqqqff] (-0.54,2.08) circle (2.5pt);
\draw[color=qqqqff] (-0.4,2.44) node {$x^a_{k+1}$};
\end{scriptsize}
\end{tikzpicture}

\caption{Figure for Step 1 of the proof of the lemma~\ref{eva}}
\label{norm11}
\end{center}
\end{figure}

\begin{enumerate}
\item Let $t$ be a branching point and let $C$ be such a point that $[tC]$ is a maximal segment in $\Sigma$ 
and the angle $\angle ((tC], (ax])=\pi/3-\angle ([t\bar x),(x \bar x])$. Thus $C$ can not be an energetic point of the energetic ray $(ax]$ of order $1$. Then the following cases are possible:
\begin{enumerate}
    \item $C$ is an energetic point of the energetic ray $(ax]$ of order $2$. By Corollary~\ref{sm_an} $ \angle ((tC], (ax]) \leq \pi/6+\alpha/100$. 
    Hence $\angle ((tC], (ax])$ is equal to $\pi/6$ within $\alpha/50$.
    \item $C$ is an energetic point of the energetic ray $(bx]$
    \begin{enumerate}
        \item $C$ is an energetic point of the energetic ray $(bx]$ of order $1$.
        
        If $ C $ were an energetic point of the ray $ (bx] $ of degree $ 1 $, then there would be a contradiction with the fact that $ [\bar x t] $ lies on the path from $ \bar x $ to $ x $: polyline $ [ \bar x t] \cup [tC] $ and four empty balls of radius $ r $ (empty balls touching the points $ \bar x $ and $ C $ with centers at the corresponding points, as well as two empty balls, tangent to the energetic rays $ (ax] $ and $ (bx] $ centered at the limit points of the points corresponding to the energetic points of the rays) entail the absence of a path from $ \bar x $ to $ x $ in $ \Sigma'$. So we get a contradiction.
        
        \item $C$ is an energetic point of the energetic ray $(bx]$ of order $2$.
        
        Let the point $ C $ be an energetic point of the ray $ b $ of degree $ 2 $, then $ \angle ((tC], (bx]) \leq \pi / 6 + \alpha / 100 $. Let $ [tD] $ be the third maximal segment incident to the branching point $ t $. Note that 
         $ \angle ([Dt), (bx])>2\pi/3$. Consider two paths inside $ U $: the path starting from $ t-\bar x $ and the path starting from $ t-D $: we will show that both of them have to contain the single point from $\partial U \cap \Sigma'$, which is impossible: 
        Let the first path does not contain any energetic points of the ray $ (bx] $, so the direction of the tangent along the path changes only at the branching points and energetic points of the ray $ (ax] $ of degree $ 2 $. We require that at the branching points the path continues not by the segment that is $\alpha/50$-almost perpendicular to $ (ax] $.  Then this path does not contain any energetic points of degree $ 1 $ of the ray $ (ax]$ and all the angles between $ (ax) $ and the one-sided tangent to $ \Sigma $ along the path do not exceed $ \pi / 6 + \alpha / 100 $. Clearly this path can not end by a limit point, because by Remark otherwise $\bar x$ has two-orientated energetic ray, which is impossible for sufficiently small $\rho_1>0$ as $(ax]$ and $(bx]$ are one-orientated (in what follows we will omit this reasoning). Thus, this path cannot end with an energetic point of degree $ 1 $ and  point $ x $, but then the path inevitably passes through a single point $ \Sigma'\cap \partial U $. Similar reasoning is true for some path starting with $ t-D $ under the assumption that it does not contain energetic points $ (ax] $.
        
 Thus, it is impossible that at the same time the path starting from $ t-\bar x$ does not contain energetic points of the energetic ray $ (bx] $, and the path starting from $ t-D $ does not contain energetic points of the ray $ (ax] $. But if the path, starting with $ t - \bar x $, contains an energetic point $ (bx] $, then the second path is bounded by the first path and empty balls of radius $ r $, that is, it does not contain the energetic points of the ray $ (ax] $ and of the boundary $\partial U$, which is impossible. Similarly, the situation is impossible when the path starting with $ t-D $ contains an energetic point of the ray $ (ax] $.

        \end{enumerate}
    \item  $C$   is a branching point 

 In this case, the path starting from the segments $ [\bar x t] $ and $ [tC] $ and continuing with turns in the same direction can end only if $ C $ is connected by a segment with the energetic point of the energetic ray $ (ax] $ of order $ 1 $ , we denote it by $ x^a_2 $ (this point can not be of order $2$ as $\angle ((Cx^a_2],(ax]) \geq 2\pi/3 - \angle([t\bar x)(ax]) \geq \pi/2-\alpha/50>\pi/6+\alpha/100$. Then the angle $ \angle ((Cx^a_2), (ax)) $ is equal to $ \pi / 2 $ up to $ \alpha / 100 $. In this case, considering the quadrilateral bounded by the lines $ (ax) $, $ (\bar x t), (tC), (Cx^a_2) $, we get 
$\angle  ([t\bar x)(ax])=2\pi/3-\angle ((Cx^a_2),(ax))$,
which, up to $ \alpha / 100 $, is equal to $ \pi / 6 $ (see Fig.~\ref{norm11}).
            \end{enumerate}
Thus, in any case $ \angle ((\bar xt], (ax]) $ is equal to $ \pi / 6 $ up to $ \alpha / 50 $. Let $ [tD] $ be the remaining maximum segment with the end at the point $ t $. Then $ \angle ((tD), (ax)) $ is equal to $ \pi / 2 $ within $ \alpha / 50 $, then either $ [tD) $ and $ (bx ] $ do not intersect (if $ \alpha + \angle ((tD), (ax))> \pi $), or $ \angle ((tD], (bx]) $ equals $ \pi / 2- \alpha $ up to $ \alpha / 50 $. In this case, the point $ D $ cannot be an energetic point of the ray $ (bx] $ of order $1$. The point $D$ also can not be a energetic point of order $2$ or a branching point as in these cases there are two paths in $ \Sigma'$ (a path starting with $ t-\bar x $ and a path starting with $ t-D $) that cannot coexist (the proof is similar to that discussed above).

This contradicts the assumption that $ t $ is a branching point.
\item Let $t$ be an energetic point of the energetic ray $(bx]$.
Note that
\[
\angle((\bar x t],(bx])= \angle (( \bar x t],(b x]) + \alpha \leq \pi/6+\alpha/100,
\]
in view of Lemma~\ref{alm} and Corollary~\ref{sm_an}, since $t$ can only be an energetic point of the ray $ (bx] $ of degree $ 2 $ belonging to the set $ X_\Sigma $. The previous item entails that the maximum segment on the path to $\Sigma $ from $t$ to $ x $ ends with an energetic point of the energetic ray $ (ax] $, we denote it by $ x^a_2 $. This point is of order $2$ and $\angle((tx^a_2),(ax))>\alpha/50$ thus the previous item entails the existence in $\Sigma'$ a poly-line $\bar x -t-x^a_2-x^b_2$, where $x^b_2$ is an energetic point of order $2$ of the energetic ray $(bx]$ such that the segment $[x^a_2x^b_2]$ is a maximal segment in a path from $x^a_2$ to $x$ in $\Sigma$. But then the previous item entails the existence in $\Sigma'$ a poly-line $\bar x -t-x^a_2-x^b_2-x^a_3$, where $x^a_3$ is an energetic point of order $2$ of the energetic ray $(ax]$ such that the segment $[x^b_2x^a_3]$ is a maximal segment in a path from $x^a_2$ to $x$ and so on, so there exists an infinite poly-line with alternating energetic points of two energetic rays as the vertices. Wherein at every step the angle between a segment of poly-line and the energetic ray increases by  $\alpha$ within $\alpha/100$ (see Fig.~\ref{norm2}):
$\angle((x^a_kx_k^b],(bx])= \angle ((x_k^bx_k^a], (xx_k^a]) + \alpha $ within $\alpha/100$ and $\angle((x^b_kx^a_{k+1}],(ax])= \angle ((xx^b_k],(x_{k+1}^ax^b_k])+\alpha \geq \angle ((x^a_kx^b_k],(bx])+ \alpha -\alpha/100= \angle ((x_k^bx_k^a], (xx_k^a]) + 2\alpha-\alpha/100$ within $\alpha/100$
for every $k\in \N$ if $x^a_1 \defeq x$ and $x^b_1 \defeq t$.
Thus, there exists a number $ k $ such that $ \angle ((x ^ i_kx ^ j_k), (jx))> \pi / 6 + \alpha / 50 $, where $ \{i, j \} = \{a, b \} $, but then the point $ x^k_j $ cannot be an energetic point of degree $ 2 $ of the ray $ (jx] $. The resulting contradiction completes the proof of the step.
\end{enumerate}


\item[Step 2.] Every energetic point of $\Sigma' \cap U \setminus \{x\}$ belongs to the set $X_\Sigma$ and has the order $1$.
\begin{proof}
Assume the contrary: due to item~\ref{__1} of Lemma~\ref{alm} and step 1 it means that there exists such $\bar x \in \Sigma' \cap U \setminus \{x\}$ of order $2$ that it belongs to $E_\Sigma$ (and hence by Lemma~\ref{alm} and Corollary~\ref{notv} there exist one-sided tangents at $\bar x$ $\alpha/100$-almost parallel to $(ix]$, the closest energetic ray to $\bar x$) or to $X_\Sigma$ with both incident segments $\alpha/10$-almost parallel to $(ix]$. 

Consider the path from $\bar x$ to $x$. This path is contained in $U$ by definition of $U$ and its points by Remark~\ref{remA} can not be a limit point of the branching points. Thus if this path contains some branching points one can pick the first branching point $C_0$. Then, in view of compactness of $E_\Sigma$, we can take the last energetic point $B$ from the path from $\bar x$ to $C_0$  (it may coincide with $\bar x$). Thus the path from $B$ to $C_0$ has to be a segment and, due to Lemma~\ref{alm}, Corollary~\ref{notv} and step 1, this segment is contained in a line, $\alpha/100$-almost parallel to $(jx]$, energetic ray closest to $B$ for $j\in \{a,b\}$. Without loss of generality let $j=a$. Consider the maximal segment $[C_0C_1]\subset \Sigma'$, such that $\angle((ax](C_0C_1])$ is equal to $\frac{\pi}{3}$ within $\alpha/100$. Hence, by Lemma~\ref{alm} and Corollary~\ref{sm_an} $C_1$ can not be an energetic point and thus it has to be a branching point. Then let $[C_1C_2] \subset \Sigma'$ be such a maximal segment that it is not contained in a almost parallel to $(ax)$ line and hence $\angle((ax](C_1C_2])$ is equal to $\frac{2\pi}{3}$ within $\alpha/100$. Thus $C_2$ has to be a branching point. But then the path, beginning by $C_0-C_1-C_2-\ldots$ and continuing by the alternating turns (id est the injective poly-line $\bigcup_{j=0} [C_j C_{j+1}]$, with $\angle((ax](C_{j}C_{j+1}])=\pi-\angle((ax](C_{j-1}C_{j}])$) can not ends.

If the path from $ \overline{x} $ to $ x $ does not contain any branching points, then the arc $ \breve{]\overline{x} x[} $ will also have no energetic points of the energetic ray $ (bx] $: otherwise, let $ x_b $ be the first such point; and let $ x_a $ be the last energetic point of the ray $ (ax] $ from this arc before $x_b$ ($x_a \in Q^{a,x}_{\rho_1}$ and $x_b \in Q^{b,x}_{\rho_1}$ are the different points because $Q^{a,x}_{\rho_1}\cap Q^{b,x}_{\rho_1} =\{x\}$). In this case, the arc $ \breve{x_a x_b} $ is a segment, contained in a line almost parallel to both energetic-rays, which is impossible:
\[
\angle ((x_a x_b), (ax)) \leq \alpha/10
\]
and
\[
\angle ((x_a x_b), (bx)) \leq \alpha/10
\]
entails
\[
\alpha=\angle((ax],(bx]) =\angle((ax),(bx))\leq \alpha/5,
\]
which is impossible for $\alpha>0$. 

Thus the path from $\bar x$ to $x$ doesn't contain any energetic points of the energetic ray $(bx]$ or any branching points, id est in each point of the path the tangent ray of $\Sigma'$ is $\alpha/10$ almost parallel to $(ax]$. Then there exists such open area $U_0 \subset B_{|\bar x x|}(x)$, that $x \in U_0$, $\sharp (\partial U \cap \Sigma ) = \oord_x \Sigma$. Then $U_0 \cap \Sigma' \subset \breve{\overline{x}x}$ doesn't contain any energetic points of the ray $(bx]$. So we get a contradiction with a definition of $\Sigma'$ for $\rho_2 \defeq \rho_1$.

\end{proof}

\item [Step 3]
Let $ U_1 \ni x $ be an open area of sufficiently small diameter (so that $ \Sigma'\cap (U \setminus U_1) $ contains sufficiently many energetic points of both energetic rays: for example at least three energetic points of each energetic ray) such that $ U_1 \subset U $, $ \sharp (\partial U_1 \cap \Sigma) = \oord_x \Sigma $.
In view of the previous steps, $\overline{\Sigma'\cap (U \setminus U_1)} $ is a locally minimal tree (with a finite number of vertices) without vertices of degree $ 2 $. Then there exist three lines with pairwise angles $ \pi / 3 $ such that each segment of $ \overline{\Sigma'\cap (U \setminus U_1)} $ is parallel to one of them.
Each endpoint of the Steiner tree $ \overline{\Sigma'\cap (U \setminus U_1)} $, except two points of $ (\partial U \cup \partial U_1) \cap \Sigma' $, lies on a segment belonging to the line $\alpha/100$-almost perpendicular to one of the rays. Therefore, since the set $ \Sigma'\cap (U \setminus U_1) $ contains the energetic points of both rays, the angle $ \alpha = \angle axb \leq \pi$ can only be $\frac{\pi}{3}$, $\frac{2\pi}{3}$ or $\pi$ within $\alpha/100$.

\item[ Step 4]
\begin{figure}[h]
    \centering
\definecolor{qqwuqq}{rgb}{0.,0.39215686274509803,0.}
\definecolor{xdxdff}{rgb}{0.49019607843137253,0.49019607843137253,1.}
\definecolor{qqqqff}{rgb}{0.,0.,1.}
\definecolor{ffqqqq}{rgb}{1.,0.,0.}
\definecolor{uuuuuu}{rgb}{0.26666666666666666,0.26666666666666666,0.26666666666666666}
\begin{tikzpicture}[line cap=round,line join=round,>=triangle 45,x=1.0cm,y=1.0cm]
\clip(-3.3,-2.7) rectangle (3.24,3.04);
\draw [->,color=ffqqqq] (0.,0.) -- (0.,-2.);
\draw [->,color=qqqqff] (0.,0.) -- (1.7320508075688772,-1.);
\draw [->,color=ffqqqq] (0.,0.) -- (1.7320508075688776,1.);
\draw [->,color=qqqqff] (0.,0.) -- (0.,2.);
\draw [->,color=ffqqqq] (0.,0.) -- (-1.7320508075688767,1.);
\draw [->,color=qqqqff] (0.,0.) -- (-1.732050807568878,-1.);
\draw [->,color=qqqqff] (0.,0.) -- (-1.7320508075688774,-1.);
\draw [dash pattern=on 5pt off 5pt,domain=-3.300000000000001:1.1407883832488646] plot(\x,{(--0.8675987569037055-0.*\x)/-1.31726896031426});
\draw [dash pattern=on 5pt off 5pt,domain=-3.300000000000001:1.1407883832488646] plot(\x,{(-0.8675987569037056--1.1407883832488648*\x)/-0.65863448015713});
\begin{scriptsize}
\draw [fill=uuuuuu] (0.,0.) circle (1.5pt);
\draw[color=ffqqqq] (0.12,-0.84) node {$1$};
\draw[color=qqqqff] (0.8,-0.26) node {$2$};
\draw[color=ffqqqq] (1.,0.74) node {$1$};
\draw[color=qqqqff] (0.16,1.0) node {$-1$};
\draw[color=ffqqqq] (-0.86,0.6) node {$-2$};
\draw[color=qqqqff] (-0.62,-0.6) node {$-1$};
\draw [fill=xdxdff] (1.1407883832488646,-0.6586344801571302) circle (2.5pt);
\draw[color=xdxdff] (1.38,-0.3) node {$x$};
\end{scriptsize}
\end{tikzpicture}
    \caption{Rose of wind for $\alpha = \pi/3$}
    \label{fig:roza1}
\end{figure}
\begin{figure}[h]
    \centering
\definecolor{qqwuqq}{rgb}{0.,0.39215686274509803,0.}
\definecolor{xdxdff}{rgb}{0.49019607843137253,0.49019607843137253,1.}
\definecolor{qqqqff}{rgb}{0.,0.,1.}
\definecolor{ffqqqq}{rgb}{1.,0.,0.}
\definecolor{uuuuuu}{rgb}{0.26666666666666666,0.26666666666666666,0.26666666666666666}
\begin{tikzpicture}[line cap=round,line join=round,>=triangle 45,x=1.0cm,y=1.0cm]
\draw [->,color=ffqqqq] (0.,0.) -- (0.,-2.);
\draw [->,color=qqqqff] (0.,0.) -- (1.7320508075688772,-1.);
\draw [->,color=ffqqqq] (0.,0.) -- (1.7320508075688776,1.);
\draw [->,color=qqqqff] (0.,0.) -- (0.,2.);
\draw [->,color=ffqqqq] (0.,0.) -- (-1.7320508075688767,1.);
\draw [->,color=qqqqff] (0.,0.) -- (-1.732050807568878,-1.);
\draw [dash pattern=on 5pt off 5pt] (0.9947818472546173,0.)-- (-0.28886206679760795,2.223336477965032);
\draw [dash pattern=on 5pt off 5pt] (0.9947818472546173,0.)-- (-0.2888620667976096,-2.2233364779650318);
\begin{scriptsize}
\draw [fill=uuuuuu] (0.,0.) circle (1.5pt);
\draw[color=ffqqqq] (0.12,-0.84) node {$1$};
\draw[color=qqqqff] (1.02,-0.26) node {$2$};
\draw[color=ffqqqq] (1.,0.74) node {$1$};
\draw[color=qqqqff] (0.16,1.16) node {$-1$};
\draw[color=ffqqqq] (-0.86,0.6) node {$-2$};
\draw[color=qqqqff] (-0.72,-0.4) node {$-1$};
\draw [fill=xdxdff] (0.9947818472546173,0.) circle (2.5pt);
\draw [fill=xdxdff] (0.9947818472546173,0.) circle (2.5pt);
\draw[color=xdxdff] (1.3,0.28) node {$x$};
\end{scriptsize}
\end{tikzpicture}
    \caption{Rose of wind for $\alpha = 2\pi/3$}
    \label{fig:roza2}
\end{figure}
\begin{figure}
    \centering
\definecolor{xdxdff}{rgb}{0.49019607843137253,0.49019607843137253,1.}
\definecolor{qqqqff}{rgb}{0.,0.,1.}
\definecolor{uuuuuu}{rgb}{0.26666666666666666,0.26666666666666666,0.26666666666666666}
\begin{tikzpicture}[line cap=round,line join=round,>=triangle 45,x=1.0cm,y=1.0cm]
\clip(-3.3,-2.8) rectangle (3.24,3.04);
\draw [->,line width=0.8pt,color=qqqqff] (0.,0.) -- (1.7320508075688772,-1.);
\draw [->,line width=0.8pt,color=qqqqff] (0.,0.) -- (0.,2.);
\draw [->,line width=0.8pt,color=qqqqff] (0.,0.) -- (-1.732050807568878,-1.);
\draw [line width=0.8pt,dash pattern=on 5pt off 5pt,domain=-3.300000000000001:1.1407883832488646] plot(\x,{(--1.7351975138074112-1.1407883832488643*\x)/-0.6586344801571301});
\draw [line width=0.8pt,dash pattern=on 5pt off 5pt,domain=1.1407883832488646:3.240000000000001] plot(\x,{(-1.7351975138074112--1.1407883832488646*\x)/0.65863448015713});
\begin{scriptsize}
\draw [fill=uuuuuu] (0.,0.) circle (1.5pt);
\draw[color=qqqqff] (0.97,-0.23) node {$1$};
\draw[color=qqqqff] (0.16,1.17) node {$-1$};
\draw[color=qqqqff] (-0.7,-0.2) node {$0$};
\draw [fill=xdxdff] (1.1407883832488646,-0.6586344801571301) circle (2.5pt);
\draw[color=xdxdff] (1.4,-0.66) node {$x$};
\end{scriptsize}
\end{tikzpicture}
 \caption{Rose of wind for $\alpha = \pi$}
    \label{fig:roza3}
\end{figure}
Let us construct the wind rose with the center inside the internal angle $\angle axb$ and the lines, containing perpendiculars to both energetic rays $(ax]$ and $(bx]$. We place the weights depending on the angle $\angle axb$:
\begin{itemize}
\item If $\alpha = \frac{\pi}{3}$ then place $1$ at both rays directed towards to $(ax]$ and $(bx]$ and perpendicular to it (see Fig.~\ref{fig:roza1}). Thus there are weights $-1$ at the complementary rays, and weights $2$ and $-2$ at the last line.
 \item If $\alpha = \frac{2\pi}{3}$ then place $1$ at the ray, perpendicular to $(ax]$ and directed towards to it, and place $2$ at the ray, perpendicular to $(bx]$ and directed towards to it (see Fig.~\ref{fig:roza2}). Thus there are weights $-1$ and $-2$ at the complementary rays, and weights $1$ and $-1$ at the last line.
 \item If $\alpha = \pi$ we place $1$ at the ray directed towards to $(ab)$  and perpendicular to it, and weights $-1$ and $0$ at the remaining ones (see Fig.~\ref{fig:roza3}). Thus there are weights $-1$, $1$ and $0$ at the complementary rays.
 \end{itemize}
 
 Now, as in Remark~\ref{main_rose_property}, we assign to each terminal vertex of the tree  $\overline{\Sigma \cap U}$ the weight of the ray of the corresponding wind rose, co-directed with the segment entering this vertex.

 
 
By Remark~\ref{main_rose_property}, sum of all weights of $\Sigma' \cap (U \setminus U_1)$ is equal to $0$. On the other hand, sum is positive, as there are arbitrarily many positive terms, and at most two negative. So we get a contradiction.

\end{itemize}

\end{proof}






\paragraph{Acknowledgments.}
Statements from Lemma~\ref{angles2} to Lemma~\ref{Losiu} are supported by ``Native towns'', a social investment program of PJSC ``Gazprom Neft''. Other statements of this work are supported by grant 16-11-10039 of the Russian Science Foundation. I would like to thank~Evgeny Stepanov for setting the problem and providing scientific guidance in 2010-2018, Danila Cherkashin, without whom this work would never have been completed, and Azat Miftakhov for an inspiring example of doing mathematics in any conditions.

\bibliographystyle{plain}
\bibliography{main}

\end{document}